\newtheorem{theorem}{Theorem}[section]
\newtheorem{lemma}[theorem]{Lemma}
\newtheorem{corollary}[theorem]{Corollary}
\newtheorem{proposition}[theorem]{Proposition}
\theoremstyle{definition}
\newtheorem{definition}[theorem]{Definition}
\newtheorem{example}[theorem]{Example}
\theoremstyle{remark}
\newtheorem{remark}[theorem]{Remark}
\numberwithin{equation}{section}
\newcommand{\Real}{{\mathbb R}}
\newcommand{\eps}{\varepsilon}
\newcommand{\x}{\mathbf{x}}
\newcommand{\y}{\mathbf{y}}
\newcommand{\z}{\mathbf{z}}
\newcommand {\hide}[1]{}
\begin{document}
\title[Deciding whether a tropical linear prevariety is a tropical variety]
{Complexity of deciding whether a tropical linear prevariety is a tropical variety}
{
\let\thefootnote\relax\footnote{2010 Mathematics Subject Classification 14T05}
}
\author{Dima Grigoriev}
\address{CNRS, Math\'ematiques, Universit\'e de Lille, Villeneuve d'Ascq, 59655, France}
\email{dmitry.grigoryev@math.univ-lille1.fr}
\author{Nicolai Vorobjov}
\address{
Department of Computer Science, University of Bath, Bath
BA2 7AY, England, UK}
\email{nnv@cs.bath.ac.uk}

\begin{abstract}
We give an algorithm, with a singly exponential complexity,
deciding whether a tropical linear {\em prevariety} is a tropical linear {\em variety}.
The algorithm relies on a criterion to be a tropical linear variety in terms of a duality between the tropical
orthogonalization $A^\perp$ and the double tropical orthogonalization $A^{\perp \perp}$ of a subset $A$
of the vector space $(\Real \cup \{ \infty \})^n$.
We also give an example of a countable family of tropical hyperplanes such that their intersection
is not a tropical prevariety.
\end{abstract}

\maketitle

\section*{Introduction}

In this paper we use the operation $A^\perp$ of tropical orthogonalization, applied to a subset $A$ of
a vector space $(\Real \cup \{ \infty \})^n$, and its iteration, $A^{\perp \perp}$,
to formulate a criterion and an algorithm, deciding
whether a tropical linear {\em prevariety} is a tropical linear {\em variety}.

General concepts of tropical algebra can be found in \cite{BookJ, MS, RST}.
Specific questions of tropical linear algebra were considered in \cite{DSS, DS, S, YY}.

In Section~1 we list basic definitions, including the concept of a {\em tropical linear hull}
of a subset in $(\Real \cup \{ \infty \})^n$.
We recall a theorem, proved in \cite{BH, GK}, stating that any tropical linear prevariety
is the tropical linear hull of a finite set of vectors.
We list some properties of double orthogonalization,
in particular, we prove that $A^{\perp \perp}$ is the minimal tropical linear prevariety containing
the finite set $A$.
Further in Section~1, we recall a theorem, implicitly proved in \cite{S} (see also \cite{GK} and \cite{MT}),
stating that for two mutually complementary and orthogonal linear subspaces $P$ and $Q$
of the vector space $({\mathbb C}((t^{1/ \infty})))^n$ over Puiseux series, there exists a finite set
$A \subset (\Real \cup \{ \infty \})^n$ such that tropicalizations of $P$ and $Q$ coincide with
$A^\perp$ and $A^{\perp \perp}$ respectively.
This theorem is essential for the main result in Section~3, and
for reader's convenience we present its new short proof.

Sections 2 and 3 contain descriptions of some algorithms and estimates of their complexities.
Each of these algorithms can, in principle, be modelled by a Turing machine, and complexity is understood
in terms of its steps, i.e., as bit complexity.

In Section~2 we describe an algorithm, with singly exponential complexity in the number of variables, which for a given
(classical) algebraic set $V \subset ({\mathbb C}((t^{1/ \infty})))^n$ and a point $\x \in {\mathbb Q}^n$
decides whether or not $\x$ belongs to the tropicalization of $V$, and, if it does,
produces a lifting of $\x$ in $V$.
This algorithm serves as a key subroutine for algorithms in Section~3, but also is of independent interest.
An algorithm for the same problem follows from \cite{JMM}
and apparently has doubly exponential complexity bound in the number of variables.

Section~3 contains the main result of the paper: a criterion and deciding algorithms for
a tropical linear prevariety to be a tropical linear variety.
We propose two deciding algorithms.
One has the complexity exponential in bit-sizes of rational generators of the tropical linear hull representing
tropical linear prevariety and singly exponential in the number of variables.
Another algorithm is polynomial in bit-size of generators.
Its complexity also depends on the complexity of computing of a tropical basis (see \cite{BJS, HT, JS, KK}) of
a system of (classical) multivariate polynomial equations.
Apparently, the latter complexity is doubly exponential in the number of variables, in which case the complexity
of our second algorithm is also doubly exponential in the number of variables.

We also describe an algorithm which for a given tropical linear variety $A^\perp$
produces a linear subspace $P$ whose tropicalization coincides with $A^\perp$.
This algorithm has a singly exponential complexity in the number of variables.

In Section~4 we give an example of a countable family of tropical hyperplanes in
$(\Real \cup \{ \infty \})^6$ such that their intersection is not a tropical prevariety.
This strengthens examples in \cite{DW} (example of T.~Theobald) and \cite{GV} about countable intersections
of non-linear tropical hypersurfaces.

The extended abstract of this paper appeared in \cite{GV-CASC}.

\section{Preliminaries}

\subsection{Tropical linear prevarieties and tropical linear hulls}
We use the notation $\Real_\infty$ for $\Real \cup \{ \infty \}$.
We assume that for all $a \in \Real$ the rules $a < \infty$, $a+ \infty = \infty$, $\infty + \infty= \infty$,
and, for positive $a$, $a \cdot \infty = \infty$ hold.
The element $\infty$ is a ``tropical zero'', being the neutral element with respect to taking minimum.

\begin{definition}[\cite{DS}, \cite{MS}]\label{def:hyperplane}
For a given $(a_1, \ldots ,a_n) \in \Real_\infty^n$,
a {\em tropical hyperplane} in $\Real_\infty^n$ is the set of all points $(x_1, \ldots ,x_n) \in \Real_\infty^n$
at which the set $\{ x_1+a_1, \ldots , x_n+a_n \}$ has at least two minimal elements.
A {\em tropical linear prevariety} in $\Real_\infty^n$ is the intersection of a finite number of tropical hyperplanes.
\end{definition}

\begin{remark}
The point $(\infty, \ldots, \infty)$ belongs to every tropical linear prevariety.
A tropical hyperplane according to Definition~\ref{def:hyperplane} corresponds to the notion
of a codimension one linear subspace in classical linear algebra.
It can be identified with a special case, when $a_{n+1}= \infty$, of a more general notion of a tropical hyperplane,
defined as a set of all points $(x_1, \ldots ,x_n) \in \Real_\infty^n$ at which a set
$\{ x_1+a_1, \ldots , x_n+a_n ,a_{n+1}\}$, where $a_i \in \Real_\infty,\ 1 \le i \le n+1$, has at least two minimal elements.
\end{remark}

\begin{definition}[\cite{DS}, \cite{MS}]\label{def:perp}
Vectors ${\bf v}=(v_1, \ldots , v_n),\ {\bf a}=(a_1, \ldots ,a_n) \in \Real_\infty^n$ are called {\em tropically orthogonal}
if among numbers $v_i+a_i,\ 1 \le i \le n$ there are at least two minimal.
Note that $(\infty, \ldots, \infty)$ is tropically orthogonal to every vector ${\bf a} \in \Real_\infty^n$.
For a set of vectors $A= \{ {\bf a}_1, \ldots , {\bf a}_k  \} \subset \Real_\infty ^n$ denote by $A^{\perp}$
the set of all vectors in $\Real_\infty^n$ tropically orthogonal to each ${\bf a}_i,\ 1 \le i \le k$.
\end{definition}

It is clear that $\{ {\bf a} \}^\perp$ is a tropical hyperplane for a vector ${\bf a} \in \Real_\infty ^n$,
while $A^\perp$ is a tropical linear prevariety when $A \subset \Real_\infty ^n$ is finite.
Conversely, every tropical linear prevariety in $\Real_\infty ^n$ coincides with $A^\perp$ for a suitable finite set of vectors
$A \subset \Real_\infty ^n$.

\begin{definition}[\cite{DS}, \cite{MS}]
For a finite set of vectors $A= \{ {\bf a}_1, \ldots , {\bf a}_k  \} \subset \Real_{\infty}^n$ define its
{\em tropical linear hull} ${\rm Trophull} (A)$ as the set of all vectors in $\Real_{\infty}^n$ of the kind
$$\min_{1 \le i \le k}\{ t_i{\bf 1}_n+ {\bf a}_i \},$$
where $t_1, \ldots ,t_k$ are arbitrary elements in $\Real_{\infty}$,
$\min_{1 \le i \le k}$ denotes the component-wise minimum of a set of vectors, and ${\bf 1}_n=(1, \ldots ,1)$ is
the unit vector in $\Real^n$.
For an arbitrary subset $X \subset \Real_\infty ^n$ define ${\rm Trophull}(X)$ as the union of sets
${\rm Trophull}(A)$ over all finite subsets $A \subset X$.
\end{definition}

Note that ${\rm Trophull} (A)$ is the direct tropical analogy of the concept of {\em linear hull} of
a finite set of vectors in classical linear algebra.
It always contains the point $(\infty, \ldots ,\infty) \in \Real_\infty^n$,
because all $t_i$, $1 \le i \le k$ can be chosen to be $\infty$.




\begin{definition}\label{def:chart1}
For every partition $\{ i_1, \ldots ,i_p \} \cup \{ i_{p+1}, \ldots ,i_n \}$ of $\{ 1, \ldots ,n \}$
a {\em chart} in $\Real_\infty ^n$ is an open convex polyhedron
$$
C_{i_1, \ldots ,i_p}:=\{ x_{i_1}= \cdots =x_{i_p}= \infty \} \cap \{ x_{i_{p+1}}< \infty \}
\cap \cdots \cap \{x_{i_n} < \infty \} \subset \Real_\infty ^n.
$$
\end{definition}
Clearly, $\Real_\infty ^n$ is the union of all $2^n$ pair-wise disjoint charts.

One can extend the standard concepts of a convex polyhedron and a finite polyhedral complex to the
case of the subsets of the space $\Real_\infty ^n$ (see \cite{GK}).
Restriction of a convex polyhedron $P \subset \Real_\infty ^n$ to a chart $C_{i_1, \ldots ,i_p}$
coincides with a usual convex polyhedron in $\Real^{n-p}$ translated by a vector in $\{0, \infty \}^{n-p}$
with $\infty$ in positions $i_1, \ldots ,i_p$.
Hence, $P$ is a finite union of translated usual convex polyhedra, and we define the dimension
$\dim (P)$ as the maximum of the dimensions of restrictions of $P$ to all charts.
The dimension of a finite polyhedral complex is defined as the maximum of dimensions of its convex polyhedra.

The following theorem directly follows from \cite[Theorem~1]{GK} (part (2)
of the theorem, except the complexity bound, was proved earlier in \cite[Proposition~2]{BH}).

Let $A= \{ {\bf a}_1, \ldots , {\bf a}_k  \} \subset \Real_\infty ^n$ be a set of vectors.

\begin{theorem}\label{th:main}
\begin{enumerate}
\item
The set ${\rm Trophull} (A)$ is a union of all convex polyhedra of a polyhedral complex in $\Real_\infty ^n$.
\item
For the tropical linear prevariety $A^\perp \subset \Real_\infty ^n$
there exists a finite set of vectors $\{ {\bf b}_1, \ldots , {\bf b}_N \} \subset \Real_\infty ^n$ such that
$A^\perp={\rm Trophull} ( \{ {\bf b}_1, \ldots ,{\bf b}_N \})$.
Moreover, there is an algorithm which for a given set $\{ {\bf a}_1, \ldots , {\bf a}_k  \}$
of vectors in ${\mathbb Q}_\infty ^n$, with bit-sizes of coordinates not exceeding $L$, computes the set
$\{ {\bf b}_1, \ldots , {\bf b}_N \} \subset {\mathbb Q}_\infty ^n$.
The complexity of this algorithm is polynomial in $L$ and $n^k$.
Bit-sizes of coordinates of the computed vectors ${\bf b}_1, \ldots , {\bf b}_N$ do not exceed
$L+ \log k$, while $N=O(n^k)$.
\end{enumerate}
\end{theorem}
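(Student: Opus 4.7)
The plan is to establish both parts by exhibiting an explicit polyhedral complex structure on each side, and then reading off a finite set of Trophull generators from its vertices.

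For part~(1), I would describe ${\rm Trophull}(A)$ as the image of the piecewise-affine map $\Psi\colon \Real_\infty^k \to \Real_\infty^n$ defined by $\Psi(t_1,\ldots,t_k) = \min_{1 \le i \le k}\{ t_i {\bf 1}_n + {\bf a}_i \}$. For each function $\sigma\colon \{ 1,\ldots,n \} \to \{ 1,\ldots,k \}$, let $C_\sigma \subset \Real_\infty^k$ be the convex polyhedron on which the minimum at coordinate $j$ is attained by $i=\sigma(j)$. The finitely many $C_\sigma$ cover $\Real_\infty^k$, and $\Psi$ restricted to each $C_\sigma$ is affine, so each image $\Psi(C_\sigma)$ is a convex polyhedron in $\Real_\infty^n$. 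Taking faces and pairwise intersections of these images produces the required polyhedral complex.

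For part~(2), I would first observe that each tropical hyperplane $\{ {\bf a}_i \}^\perp$ is closed under coordinatewise minimum: writing $m_{\bf x} = \min_j(x_j + a_{i,j})$ and $S_{\bf x}$ for the (at least two) indices attaining it, one checks directly that $S_{\bf x} \subseteq S_{\min({\bf x},{\bf y})}$ whenever $m_{\bf x} \le m_{\bf y}$. Hence $A^\perp$ itself is tropically convex. To obtain a finite Trophull presentation, I would cover $A^\perp$ by a polyhedral complex: for each $i$, write $\{ {\bf a}_i \}^\perp$ as the union over pairs $\{p,q\} \subset \{1,\ldots,n\}$ of the convex polyhedra $\{ {\bf x} : x_p + a_{i,p} = x_q + a_{i,q} \le x_j + a_{i,j}\ \text{for all}\ j \}$. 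Intersecting one such polyhedron per $i$ and restricting to charts $C_{i_1,\ldots,i_p}$ yields a polyhedral complex $\mathcal{P}$ with polynomially in $n^k$ many maximal cells. Take $\{ {\bf b}_1,\ldots,{\bf b}_N \}$ to consist of the vertices of the maximal cells of $\mathcal{P}$ in the quotient $\Real_\infty^n / \Real\cdot{\bf 1}_n$, lifted arbitrarily. Each maximal cell, invariant under tropical scaling by ${\bf 1}_n$, has a bounded quotient, so every point of the cell is a convex combination of the corresponding vertices---which in the lifted picture is exactly a Trophull combination. The reverse containment ${\rm Trophull}(\{ {\bf b}_j \}) \subseteq A^\perp$ follows from the tropical convexity of $A^\perp$ together with ${\bf b}_j \in A^\perp$ by construction.

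The bit-size bound $L + O(\log k)$ on the coordinates of the ${\bf b}_j$ comes from Cramer's rule applied to the linear systems defining the vertices: the coefficient matrices have entries in $\{ -1, 0, 1 \}$ while the right-hand sides are coordinates of the ${\bf a}_i$. The main obstacle I anticipate is verifying that, once a combinatorial type is fixed (a pair $\{p_i,q_i\}$ per $i$ together with a chart), the resulting cell has no lineality beyond $\Real\cdot{\bf 1}_n$, so that its quotient is genuinely a bounded polytope with finitely many vertices; this is the essential polyhedral-geometry input underlying the results of \cite{BH} and \cite{GK}.
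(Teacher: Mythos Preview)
The paper does not actually prove this theorem: the text immediately preceding the statement says it ``directly follows from \cite[Theorem~1]{GK}'' (with part~(2), except the complexity bound, attributed to \cite[Proposition~2]{BH}). So there is no in-paper proof to compare against; your proposal is an attempt to reconstruct what those references do.

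Your sketch is along the right lines, but the obstacle you flag at the end is not merely a technical verification---it is false as stated, and this is where the real content lies. Take $A=\{(0,0,0)\}\subset\Real_\infty^3$. The cell $\{x_1=x_2\le x_3\}$ of $A^\perp$, restricted to the finite chart $\Real^3$ and passed to $\Real^3/\Real\cdot{\bf 1}_3$, is an unbounded ray, not a polytope; there is no vertex of this cell inside $\Real^3$ that could serve as a generator. The correct Trophull generators for $A^\perp$ are $(0,0,\infty),(0,\infty,0),(\infty,0,0)$, all of which live in lower-dimensional charts. More generally, unbounded directions of cells in $\Real^n/\Real\cdot{\bf 1}_n$ must be encoded by generators with $\infty$ coordinates, and extracting these ``extreme rays'' (and showing finitely many suffice) is precisely the tropical Minkowski--Weyl theorem of \cite{BH,GK}. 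Your proposal, by taking only vertices of bounded quotient cells, misses this mechanism entirely; the fix is to work with the closure of each cell in $\Real_\infty^n$ (which \emph{is} compact modulo ${\bf 1}_n$ in the appropriate sense) and take its extreme points there, or equivalently to record both vertices and recession-cone generators.

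A smaller point: in part~(1), images of convex polyhedra under affine maps are convex polyhedra, but a finite family of such images need not form a polyhedral complex just by ``taking faces and pairwise intersections''---two images can overlap in a full-dimensional region without being equal. One needs the common refinement induced from the domain decomposition, which is what the tropical-convexity literature actually does.
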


\begin{corollary}\label{cor:main}
Every tropical linear prevariety $A^\perp \subset \Real_\infty ^n$ is
the union of all convex polyhedra of a polyhedral complex in $\Real_\infty ^n$.
\end{corollary}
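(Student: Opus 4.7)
The plan is to simply chain together the two parts of Theorem~\ref{th:main}. By part~(2), for the given tropical linear prevariety $A^\perp \subset \Real_\infty^n$ there exists a finite set of vectors $\{{\bf b}_1, \ldots, {\bf b}_N\} \subset \Real_\infty^n$ such that
$$A^\perp = {\rm Trophull}(\{{\bf b}_1, \ldots, {\bf b}_N\}).$$
Then I would apply part~(1) of the same theorem to this finite set, which asserts that ${\rm Trophull}(\{{\bf b}_1, \ldots, {\bf b}_N\})$ is the union of all convex polyhedra of a polyhedral complex in $\Real_\infty^n$. Combining these two statements yields the claim.

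There is essentially no obstacle here, since the corollary is a direct consequence of the theorem: the content is entirely absorbed into Theorem~\ref{th:main}. The only minor point worth noting is that part~(1) is stated for the tropical hull of a finite set of vectors, so one must use part~(2) first to represent $A^\perp$ as such a hull before invoking part~(1). No additional argument or computation is required.
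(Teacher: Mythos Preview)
Your proposal is correct and matches the paper's approach exactly: the paper states Corollary~\ref{cor:main} without proof, treating it as an immediate consequence of Theorem~\ref{th:main}, and your chaining of part~(2) followed by part~(1) is precisely the intended reading.
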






Theorem~\ref{th:main}, (2) states that any tropical linear prevariety
$A^\perp= \{ {\bf a}_1, \ldots ,{\bf a}_n \}^\perp \subset \Real_\infty ^n$
coincides with the tropical linear hull of a finite subset of its vectors.
This is not necessarily true for the restriction $A^\perp \cap \Real^n$ (see Example~\ref{ex:A_0} below).

\subsection{Dual tropical linear prevarieties}
We extend the operation $X^\perp$, introduced in Definition~\ref{def:perp}, so that it can be applied to arbitrary
(not necessarily finite) subsets $X \subset \Real_\infty ^n$.
Namely, denote by $X^\perp$ the set of all vectors in $\Real_\infty ^n$ orthogonal to each ${\bf a} \in X$.
We will use notations $X^{\perp \perp}:= (X^\perp)^\perp$ and $X^{\perp \perp \perp}:= (X^{\perp \perp})^\perp$.

\begin{remark}
Observe that by the definition, for a finite subset $A \subset \Real_\infty ^n$, the set
$A^{\perp \perp}$ is an intersection of an infinite number of tropical hyperplanes in $\Real_\infty ^n$.
As we will show in Section~\ref{sec:infinite} below, not every intersection of even countable number
of tropical hyperplanes is a union of cells of a finite polyhedral complex, let alone tropical linear prevariety.
However, in the special case of a finite $A$, the set $A^{\perp \perp}$
is a tropical linear prevariety (Proposition~\ref{prop:min_trop}).
\end{remark}

\begin{lemma}\label{le:inclusions}
For any subset $X \subset \Real_\infty ^n$ we have:
\begin{enumerate}
\item
${\rm Trophull}(X) \subset X^{\perp \perp}$;
\item
$X^\perp = X^{\perp \perp \perp}$.
\end{enumerate}
\end{lemma}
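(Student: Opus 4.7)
\textbf{Plan for Lemma~\ref{le:inclusions}.}

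For part (1), I would start by unpacking the definition: an arbitrary ${\bf v}\in{\rm Trophull}(X)$ has the form ${\bf v}=\min_{1\le i\le k}\{t_i{\bf 1}_n+{\bf a}_i\}$ for some finite subset $\{{\bf a}_1,\ldots,{\bf a}_k\}\subset X$ and scalars $t_i\in\Real_\infty$. To show ${\bf v}\in X^{\perp\perp}$, I fix an arbitrary ${\bf y}\in X^{\perp}$ and verify that ${\bf v}$ and ${\bf y}$ are tropically orthogonal. The key computation is the identity
\[
\min_{j}(v_j+y_j)\;=\;\min_{i}\Bigl(t_i+\min_{j}(a_{i,j}+y_j)\Bigr),
\]
obtained by pushing the scalar $y_j$ inside the minimum defining $v_j$ and swapping the order of the two minima. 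Pick $i^\ast$ attaining the outer minimum. Since ${\bf y}$ is orthogonal to ${\bf a}_{i^\ast}\in X$, the inner minimum $\min_j(a_{i^\ast,j}+y_j)$ is attained at two distinct indices $j_1,j_2$. Then the two values $t_{i^\ast}+a_{i^\ast,j_s}+y_{j_s}$ ($s=1,2$) both equal $\min_j(v_j+y_j)$, and since $v_{j_s}+y_{j_s}\le t_{i^\ast}+a_{i^\ast,j_s}+y_{j_s}$ by definition of $v_{j_s}$ as a minimum, we conclude $v_{j_1}+y_{j_1}=v_{j_2}+y_{j_2}=\min_j(v_j+y_j)$, giving the required orthogonality. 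The degenerate case in which this common minimum equals $\infty$ is automatic, since then every coordinate achieves it.

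For part (2), I would reduce everything to two elementary observations that together imply both inclusions. The first is that tropical orthogonalization is \emph{order-reversing}: if $Y\subset Z\subset\Real_\infty^n$, then $Z^\perp\subset Y^\perp$, simply because being orthogonal to a larger family is a stronger requirement. The second is the basic inclusion $X\subset X^{\perp\perp}$: given ${\bf x}\in X$ and any ${\bf y}\in X^\perp$, the vector ${\bf y}$ is orthogonal to ${\bf x}$ by the very definition of $X^\perp$, and tropical orthogonality is symmetric in its two arguments, so ${\bf x}$ is orthogonal to every element of $X^\perp$, i.e. ${\bf x}\in(X^\perp)^\perp=X^{\perp\perp}$.

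From these two facts, applying the order-reversing property to the inclusion $X\subset X^{\perp\perp}$ yields $X^{\perp\perp\perp}\subset X^\perp$, while applying the inclusion $Y\subset Y^{\perp\perp}$ to $Y=X^\perp$ yields $X^\perp\subset(X^\perp)^{\perp\perp}=X^{\perp\perp\perp}$. The two inclusions give the equality.

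The main obstacle is really just the little calculation in (1): one must be careful to interchange the two minima correctly and to argue that the outer minimum transfers back to ${\bf v}+{\bf y}$ in the same coordinates $j_1,j_2$. Part (2) is formal manipulation once symmetry of orthogonality and the order-reversing property are noted.
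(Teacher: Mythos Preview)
Your proposal is correct and follows essentially the same route as the paper. The paper's proof is extremely terse: for (1) it simply asserts ``directly follows from definitions,'' and for (2) it notes that $X\subset X^{\perp\perp}$ and $X^\perp\subset X^{\perp\perp\perp}$ are trivial, then observes that the first inclusion forces any $\x\in X^{\perp\perp\perp}$ to be orthogonal to every $\y\in X$, hence $\x\in X^\perp$. Your argument for (2) is the same, just phrased in the language of the order-reversing property of $(\cdot)^\perp$; your argument for (1) supplies the explicit computation that the paper omits, and it is carried out correctly, including the $\infty$ case.
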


\begin{proof}
\noindent (1)\ Directly follows from definitions.
\medskip

\noindent (2)\ Inclusions $X \subset X^{\perp \perp}$ and $X^\perp \subset X^{\perp \perp \perp}$ are trivial.
The first of these inclusions implies that every $\x \in X^{\perp \perp \perp}$ is orthogonal to every $\y \in X$.
Hence $\x \in X^\perp$.
\end{proof}


\begin{proposition}\label{prop:min_trop}
Let $A$ be a finite set of vectors in $\Real_\infty ^n$.
Then $A^{\perp \perp}$ is the minimal (with respect to the subset relation) tropical linear prevariety containing $A$.
\end{proposition}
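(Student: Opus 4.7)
The plan is to establish three things: (a) $A \subset A^{\perp\perp}$, (b) $A^{\perp\perp}$ is actually a tropical linear prevariety (i.e., a finite intersection of tropical hyperplanes), and (c) any tropical linear prevariety containing $A$ must contain $A^{\perp\perp}$. Containment (a) is immediate from the definitions, since every $\mathbf{a} \in A$ is by construction tropically orthogonal to each element of $A^\perp$. Minimality (c) is an easy formal consequence of the order-reversing property of $\perp$: if $Z \supset A$ is a tropical linear prevariety, write $Z = C^\perp$ for a finite $C$ (as noted after Definition~\ref{def:perp}); then $A \subset C^\perp$ means every $\mathbf{a} \in A$ is orthogonal to every $\mathbf{c} \in C$, which in turn means $C \subset A^\perp$, and applying $\perp$ gives $A^{\perp\perp} \subset C^\perp = Z$.

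The main work, therefore, is (b). I would invoke Theorem~\ref{th:main}(2) to find a finite set $B = \{\mathbf{b}_1, \ldots, \mathbf{b}_N\} \subset \Real_\infty^n$ with $A^\perp = \mathrm{Trophull}(B)$. Then $A^{\perp\perp} = \mathrm{Trophull}(B)^\perp$, and it suffices to prove the identity
\[
\mathrm{Trophull}(B)^\perp \;=\; B^\perp,
\]
since the right-hand side is, by definition, a finite intersection of tropical hyperplanes. The inclusion $\mathrm{Trophull}(B)^\perp \subset B^\perp$ is trivial because $B \subset \mathrm{Trophull}(B)$. For the reverse inclusion, I would take $\mathbf{x} \in B^\perp$ and an arbitrary element $\mathbf{v} = \min_{1 \le i \le N}\{t_i \mathbf{1}_n + \mathbf{b}_i\}$ of $\mathrm{Trophull}(B)$, and verify that $\{x_j + v_j\}_{j=1}^n$ attains its minimum at least twice.

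The key calculation is
\[
x_j + v_j \;=\; \min_i\{t_i + x_j + b_{i,j}\},
\]
so that $m := \min_j(x_j + v_j) = \min_i(t_i + m_i)$ with $m_i := \min_j(x_j + b_{i,j})$. Pick an index $i^*$ achieving $m = t_{i^*} + m_{i^*}$. Because $\mathbf{x} \perp \mathbf{b}_{i^*}$, there exist two distinct $j_1, j_2$ with $x_{j_1} + b_{i^*,j_1} = x_{j_2} + b_{i^*,j_2} = m_{i^*}$, hence $x_{j_1} + v_{j_1}, x_{j_2} + v_{j_2} \le t_{i^*} + m_{i^*} = m$, forcing equality. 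This yields the required double minimum, and I expect this to be the most delicate step; the only subtlety is handling the cases where some $t_i$ or $b_{i,j}$ equals $\infty$, which is dispatched by the stated arithmetic conventions on $\Real_\infty$. Combining (a), (b), and (c) completes the proof.
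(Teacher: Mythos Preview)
Your proof is correct and follows essentially the same route as the paper: both reduce part (b) via Theorem~\ref{th:main}(2) to the identity $\mathrm{Trophull}(B)^\perp = B^\perp$, and both handle (a) and (c) by the same formal order-reversing argument. The only difference is that the paper cites \cite{DSS, DS} for this identity, whereas you supply a direct verification; your computation is sound (the $\infty$ cases are indeed absorbed by the conventions, since if $m=\infty$ every coordinate of $\mathbf{x}+\mathbf{v}$ equals $\infty$ and the minimum is trivially attained at least twice).
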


\begin{proof}
By Theorem~\ref{th:main}, $A^\perp= {\rm Trophull} (\{ {\bf b}_1, \ldots {\bf b}_N \})$ for some
${\bf b}_1, \ldots {\bf b}_N \in \Real_\infty ^n$, hence
$A^{\perp \perp}=( {\rm Trophull} (\{ {\bf b}_1, \ldots {\bf b}_N \}))^\perp$.
According to \cite{DSS, DS},
$$( {\rm Trophull} (\{ {\bf b}_1, \ldots {\bf b}_N \}))^\perp= \{ {\bf b}_1, \ldots {\bf b}_N \}^\perp,$$
which implies that $A^{\perp \perp}$ is a tropical linear prevariety.



Let $C$ be any tropical linear prevariety containing $A$.
Then $C= \{ {\bf c}_1, \ldots , {\bf c}_M \}^\perp$ for some vectors
${\bf c}_1, \ldots , {\bf c}_M \in \Real_\infty ^n$.
Since ${\bf c}_1, \ldots , {\bf c}_M \in A^\perp$, we get the inclusion
$A^{\perp \perp} \subset \{ {\bf c}_1, \ldots , {\bf c}_M \}^\perp=C$.
Because $A \subset A^{\perp \perp}$ (cf. proof of Lemma~\ref{le:inclusions} (2)), we conclude that
$A^{\perp \perp}$ is the minimal tropical linear prevariety containing $A$.
\end{proof}

Lemma~12 in \cite{GP} (also Theorem~4.2 in \cite{DSS}) implies that $\dim (A^\perp) + \dim (A^{\perp \perp}) \ge n$
for finite $A \subset \Real_\infty ^n$.
The following example shows that the equality  $\dim (A^\perp) = \dim (A^{\perp \perp})=n-1$
is possible for some $A \subset \Real^n$, which does not happen in classical linear algebra.

\begin{example}[cf. \cite{GP}]\label{ex:A_0}
Let $A_0= \{ {\bf a}_1, \ldots ,{\bf a}_{n-1} \} \subset \Real^n$, where
$${\bf a}_i=(\underbrace{1, \ldots ,1,0}_{i},1, \ldots ,1,0,0)\> \text{for}\>
1 \le i \le n-2\> \text{and}\> {\bf a}_{n-1}=(1, \ldots ,1,0,0).$$
It is easy to see that every vector $\x=(x_1, \ldots ,x_n) \in A_0^\perp \subset \Real_\infty ^n$
should have minimal elements $x_{n-1}, x_n$, and, conversely, every vector $\x$ with minimal elements $x_{n-1}, x_n$
is in $A_0^\perp$.
Therefore,
$$A_0^\perp= \{ t {\bf 1}_n +(c_1, \ldots ,c_{n-2},0,0)|\> \text{for all}\> 0 \le c_i \in \Real_\infty\> \text{and}\>
t \in \Real_\infty \}.$$
\end{example}

From the example, we see that every vector
$\y=(y_1, \ldots ,y_n) \in A_0^{\perp \perp} \subset \Real_\infty ^n$
should have minimal elements $y_{n-1}, y_n$, and, conversely, every vector $\y$ with minimal elements $y_{n-1}, y_n$
is in $A_0^{\perp \perp}$.
It follows that $A_0^{\perp \perp}=A_0^{\perp}$.
Note that $\dim (A_0^{\perp})=\dim (A_0^{\perp \perp})=n-1$.

For another property of this example, observe that
$$A_0^{\perp}= {\rm Trophull}(\{ {\bf b}_1, \ldots ,{\bf b}_{n-1} \}),$$
where
$${\bf b}_i=(\underbrace{\infty, \ldots ,\infty,0}_{i},\infty, \ldots ,\infty,0,0)\> \text{for}\>
1 \le i \le n-2\> \text{and}\> {\bf b}_{n-1}=(\infty, \ldots ,\infty,0,0).$$

On the other hand, take $n=3$ in the example.
For any finite set $\{ {\bf v}_1, \ldots ,{\bf v}_N \} \subset A_0^\perp \cap \Real^3$ we have
$$A_0^\perp \cap \Real^3 \neq {\rm Trophull}(\{ {\bf v}_1, \ldots ,{\bf v}_N \}) \cap \Real^3.$$
Indeed, assume, for contradiction, that there is a set of vectors $V= \{ {\bf v}_1, \ldots ,{\bf v}_N \}$ such that
$A_0^\perp \cap \Real^3 = {\rm Trophull}(V) \cap \Real^3$.
Without loss of generality, assume that ${\bf v}_j=(c_j,0,0)$ for each $1 \le j \le N$, where $c_j \ge 0$.
Denoting $M:= \max_{1 \le j \le N} \{ c_j \}$, we consider the vector $(M+1,0,0) \in A_0^\perp \cap \Real^3$
and prove that it is not in ${\rm Trophull} (V)$.
Indeed, if
\begin{equation}\label{eq:contradiction}
(M+1,0,0)= \min_{1 \le j \le N} (t_j {\bf 1}_3 + {\bf v}_j)
\end{equation}
for some $t_1, \ldots ,t_N$, then, considering either second or third coordinate,
we conclude that every $t_j \ge 0$ and at least one of them, $t_{j_0}=0$.
It follows that the first coordinate of the vector $\min_{1 \le j \le N} (t_j {\bf 1}_3 + {\bf v}_j)$
in (\ref{eq:contradiction}) is at most $c_{j_0}$, which is less than $M+1$.


\subsection{Tropicalization of linear subspaces}
Let $K$ be an algebraically closed field of characteristic 0 and $\mathbb F$ denote the field
$K((t^{1/\infty}))$ of Puiseux series over $K$.
For an element $y \in {\mathbb F}$ different from 0, let ${\rm val}(y) \in {\mathbb Q}$ denote
the {\em valuation} of the element $y$ in ${\mathbb F}$, i.e., the power in the lowest term of the Puiseux series $y$.
Separately define ${\rm val}(0)= \infty$.

\begin{definition}[cf. \cite{MS}]\label{def:trop_hyper}
Let $f:=a_1x_1+ \cdots + a_nx_n$, where $0 \neq a_i \in {\mathbb F}$ for all $1 \le i \le n$.
The {\em formal tropicalization} of the hyperplane $\{ f=0 \} \subset {\mathbb F}^n$ is the tropical hyperplane,
${\rm Tropf} (\{ f=0 \}) \subset \Real^n$, defined by the set
$\{ y_1+ {\rm val} (a_1), \ldots ,y_n+ {\rm val} (a_n) \}$ (see Definition~\ref{def:hyperplane}).
\end{definition}

By Kapranov's Theorem \cite[Theorem~3.1.3]{MS}, ${\rm Tropf} (\{ f=0 \})$ coincides with the (Euclidean)
closure in $\Real^n$ of the countable set
$$\{ ({\rm val}(x_1), \ldots ,{\rm val}(x_n))|\> (x_1, \ldots ,x_n) \in \{f=0 \} \cap ({\mathbb F} \setminus \{ 0 \})^n \}.$$

The following definition is ``dual'' to Definition~\ref{def:chart1}.

\begin{definition}\label{def:chart2}
For every partition $\{ i_1, \ldots ,i_r \} \cup \{ i_{r+1}, \ldots ,i_n \}$ of $\{ 1, \ldots ,n \}$
a {\em chart in} ${\mathbb F}^n$ is a set
$$
D_{i_1, \ldots ,i_r}:=\{ x_{i_1}= \cdots =x_{i_r}= 0 \} \cap \{ x_{i_{r+1}} \neq 0 \}
\cap \cdots \cap \{x_{i_n} \neq 0 \} \subset {\mathbb F}^n.
$$
\end{definition}

For any $X \subset {\mathbb F}^n$ we obviously have
$$X = \bigcup_{\{ i_1, \ldots ,i_r \}} (X \cap D_{i_1, \ldots ,i_r}),$$
where the union is taken over all subsets $\{i_1, \ldots ,i_r \}$ of $\{ 1, \ldots ,n \}$.

\begin{definition}[cf. \cite{MS}]\label{def:tropicalization}
The {\em tropicalization} ${\rm Trop} (X \cap D_{i_1, \ldots ,i_r})$ of $X \cap D_{i_1, \ldots ,i_r}$
is the set of all points $(y_1, \ldots ,y_n) \in \Real_\infty ^n$ such that $y_{i_1}= \cdots =y_{i_r}= \infty$
and $(y_{i_{r+1}}, \ldots ,y_{i_n})$ belongs to the Euclidean closure in $\Real^{n-r}$ of the set
$$\{ ({\rm val}(x_{i_{r+1}}), \ldots ,{\rm val}(x_{i_n}))|\> (x_{1}, \ldots ,x_{n})
\in X \cap D_{i_1, \ldots ,i_r} \}.$$
The {\em tropicalization} ${\rm Trop} (X)$ of $X$ is defined as
$\bigcup_{ \{ i_1, \ldots ,i_r \}} {\rm Trop}(X \cap D_{i_1, \ldots ,i_r})$.
\end{definition}

\begin{remark}\label{re:subset_formal}
Definition~\ref{def:tropicalization} immediately implies that for any two sets $X, Y \subset {\mathbb F}^n$ there is
the inclusion ${\rm Trop}(X \cap Y) \subset {\rm Trop} (X) \cap {\rm Trop} (Y)$.
The inverse inclusion $\supset$ is not generally true even for linear subspaces.
\end{remark}

Let $P \subset {\mathbb F}^n$ be a linear subspace, with
$\dim (P)=d$, and $\z_1, \ldots ,\z_d \in P$ be a basis of $P$.
Recall that {\em Pl\"ucker coordinates} of $P$ in the Grassmanian ${\rm Gr}(d, {\mathbb F}^n)$ are all
$(d \times d)$-minors $p_{j_1, \ldots ,j_d}$ of the matrix with rows
$\z_1, \ldots ,\z_d$, corresponding to the columns $1 \le j_1 < \cdots < j_d \le n$.
Any $\z=(z_1, \ldots ,z_n) \in P$ satisfies the relation
\begin{equation}\label{eq:pluecker}
\sum_{1 \le i \le d+1} (-1)^i p_{j_1, \ldots ,j_{i-1}, j_{i+1}, \ldots ,j_{d+1}} z_{j_i}=0,
\end{equation}
for every subset of columns $1 \le j_1 < \cdots < j_{d+1} \le n$.
Note that the relations in (\ref{eq:pluecker}) are independent of the choice of a basis in $P$.

Denote the set of points $\z$ satisfying (\ref{eq:pluecker}) by $P_{j_1, \ldots ,j_{d+1}}$.

The following statement is a strengthening for $\Real_\infty$ of \cite[Proposition~4.2]{S}
(also \cite[Theorem~4.3.17]{MS}), and can be proved analogously.

\begin{lemma}\label{le:speyer}
\begin{equation}\label{eq:trop_subset}
{\rm Trop} (P) = \bigcap_{j_1, \ldots ,j_{d+1}}{\rm Trop} (P_{j_1, \ldots ,j_{d+1}}).
\end{equation}
\end{lemma}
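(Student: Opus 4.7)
The plan is to prove the two inclusions separately. The inclusion $\subset$ is immediate from Remark~\ref{re:subset_formal} and an easy induction: since $P \subset P_{j_1, \ldots ,j_{d+1}}$ for every choice of indices, monotonicity of ${\rm Trop}$ on each chart (Definition~\ref{def:tropicalization}) gives ${\rm Trop}(P) \subset {\rm Trop}(P_{j_1, \ldots ,j_{d+1}})$, and hence ${\rm Trop}(P)$ is contained in the intersection.

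For the reverse inclusion $\supset$, I would reduce chart by chart via the decomposition
$$X = \bigcup_{\{i_1,\ldots,i_r\}} (X \cap D_{i_1,\ldots,i_r})$$
stated before Definition~\ref{def:tropicalization}. Fix a chart $D_{i_1,\ldots,i_r}$ and a point $\y \in \bigcap_{j_1,\ldots,j_{d+1}} {\rm Trop}(P_{j_1,\ldots,j_{d+1}})$ whose $\infty$-coordinates are exactly $y_{i_1}=\cdots=y_{i_r}=\infty$. Write $\y'=(y_{i_{r+1}},\ldots,y_{i_n})\in\Real^{n-r}$. I need to produce $\z\in P\cap D_{i_1,\ldots,i_r}$ whose valuation vector lies in (the Euclidean closure of) the set of valuations of such $\z$, and whose finite part approximates $\y'$.

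The core step is to replace $P$ by the linear subspace $P^{*}:=P\cap\{x_{i_1}=\cdots=x_{i_r}=0\}\subset{\mathbb F}^{n-r}$ of some dimension $d^{*}\le d$. The Plücker coordinates of $P^{*}$ are, up to sign, precisely those Plücker coordinates $p_{j_1,\ldots,j_{d^{*}}}$ of $P$ (completed in the positions $i_1,\ldots,i_r$) for which the index set $\{j_1,\ldots,j_{d^{*}}\}$ is contained in $\{i_{r+1},\ldots,i_n\}$. The hypothesis that $\y$ lies in ${\rm Trop}(P_{j_1,\ldots,j_{d+1}})$ for every $(d{+}1)$-tuple specializes, on the restricted index set $\{i_{r+1},\ldots,i_n\}$, to the statement that $\y'$ lies in every tropicalized Plücker hyperplane of $P^{*}$ in $\Real^{n-r}$. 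The finite-coordinates version (Speyer's \cite[Proposition~4.2]{S}, equivalently \cite[Theorem~4.3.17]{MS}) then applies to $P^{*}$ and yields a lifting $\z\in P^{*}\subset P\cap D_{i_1,\ldots,i_r}$ with the required valuations. Applied to a sequence of rational approximations of $\y'$ if necessary, this puts $\y$ in ${\rm Trop}(P\cap D_{i_1,\ldots,i_r})\subset{\rm Trop}(P)$.

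The main obstacle is the bookkeeping needed to pass from the Plücker data of $P$ to that of $P^{*}$. Concretely, one must show that the relations (\ref{eq:pluecker}) for indices contained in $\{i_{r+1},\ldots,i_n\}$ exhaust the Plücker relations of $P^{*}$, and that $\dim P^{*}=d^{*}$ is forced by the vanishing pattern of those Plücker coordinates $p_{j_1,\ldots,j_d}$ of $P$ whose index set meets $\{i_1,\ldots,i_r\}$ in prescribed positions. Once this compatibility is verified, the reduction to the classical (finite-valued) Speyer statement is mechanical.
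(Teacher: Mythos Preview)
The paper does not give a proof of Lemma~\ref{le:speyer}; it merely asserts that the statement is the $\Real_\infty$-extension of \cite[Proposition~4.2]{S} (equivalently \cite[Theorem~4.3.17]{MS}) and ``can be proved analogously.'' Your chart-by-chart reduction to the known $\Real^{n-r}$ case via the contracted subspace $P^{*}=P\cap\{x_{i_1}=\cdots=x_{i_r}=0\}$ is therefore a different route: rather than rerunning Speyer's explicit lifting construction while permitting some coordinates to be $\infty$, you blackbox the finite-valued result and reduce to it. Both approaches work; yours is cleaner once the Pl\"ucker bookkeeping is in place, while the direct adaptation avoids that bookkeeping at the cost of reproducing the core combinatorics of the lift.

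One point to tighten: your sentence on the Pl\"ucker coordinates of $P^{*}$ is misstated, since Pl\"ucker coordinates of $P$ are indexed by $d$-subsets, not $d^{*}$-subsets. The correct relation is that if $I'\subset I=\{i_1,\ldots,i_r\}$ is a maximal independent subset of $I$ in the matroid of $P$, with $|I'|=s$, then $d^{*}=d-s$ and, up to a common nonzero scalar, $p^{*}_{J}=p_{J\cup I'}$ for every $d^{*}$-subset $J\subset\{i_{r+1},\ldots,i_n\}$. Consequently, for each $(d^{*}{+}1)$-subset $J'\subset\{i_{r+1},\ldots,i_n\}$ the relation~(\ref{eq:pluecker}) for $P$ on the $(d{+}1)$-subset $J'\cup I'$, restricted to $D_I$, is precisely the Pl\"ucker relation of $P^{*}$ on $J'$. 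With this formulation (which also covers the case $s<r$ that your sketch does not explicitly address) your reduction goes through.
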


Let $Q \subset {\mathbb F}^n$ be a linear subspace orthogonal to $P$ with $\dim (Q)=n-d$.
According to \cite{BJS, S}, the tropicalizations ${\rm Trop} (P)$ and ${\rm Trop} (Q)$ are tropically orthogonal, with
$\dim ({\rm Trop}(P))=d$ and $\dim ({\rm Trop}(Q))=n-d$.

The following theorem implicitly appears in \cite{S}.
Formulated in the language of valuated matroids, an analogous statement can be found in \cite{MT}.
To make our exposition closed, for reader's convenience, we give another proof, using the technique of
Pl\"ucker relations.

\begin{theorem}\label{th:trop_orth}
There is a finite subset $A \subset \Real_\infty ^n$ such that $ {\rm Trop} (P)=A^\perp$
and ${\rm Trop} (Q)= A^{\perp \perp}$.
\end{theorem}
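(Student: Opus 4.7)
The plan is to construct $A$ explicitly from the Pl\"ucker coordinates of $P$, and then to show that $A^{\perp\perp}$ coincides with the tropicalization of $Q$ via classical Pl\"ucker relations applied to $P^\perp = Q$. For each $(d+1)$-subset $J=\{j_1<\cdots<j_{d+1}\}$ of $\{1,\ldots,n\}$, let ${\bf a}_J \in \Real_\infty^n$ be the vector whose $j_i$-th coordinate is ${\rm val}(p_{J \setminus \{j_i\}})$ (with the convention ${\rm val}(0)=\infty$) and whose other coordinates are $\infty$; let $A$ be the finite collection of all such ${\bf a}_J$. The hyperplane $P_J \subset {\mathbb F}^n$ from (\ref{eq:pluecker}) is cut out by a single linear equation whose nonzero coefficients are exactly the $\pm p_{J \setminus \{j_i\}}$, so by Kapranov's theorem, combined with a chart-by-chart analysis in the spirit of Definition~\ref{def:tropicalization}, one verifies ${\rm Trop}(P_J) = \{{\bf a}_J\}^\perp$. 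Lemma~\ref{le:speyer} then yields ${\rm Trop}(P) = A^\perp$.

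Next I would run the symmetric construction for $Q$: let $B = \{{\bf b}_K : |K| = n-d+1\}$, with the $k$-th coordinate of ${\bf b}_K$ equal to ${\rm val}(q_{K \setminus \{k\}})$ for $k \in K$ and $\infty$ otherwise. The same argument gives ${\rm Trop}(Q) = B^\perp$. The key remaining step is the inclusion $B \subset A^\perp$. Using the standard duality between the Pl\"ucker coordinates of $P$ and of $Q=P^\perp$, one has $q_{K \setminus \{j\}} = \pm\, p_{K^c \cup \{j\}}$ for each $j \in K$, where $K^c$ (of size $d-1$) denotes the complement of $K$ in $\{1,\ldots,n\}$. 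Combining this with the classical Pl\"ucker relation applied to the $(d-1)$-set $I=K^c$ and the $(d+1)$-set $J$,
\begin{equation*}
\sum_{j \in J \setminus K^c} \pm\, p_{J \setminus \{j\}} \cdot p_{K^c \cup \{j\}} \;=\; 0,
\end{equation*}
and noting that $J \setminus K^c = J \cap K$, one gets $\sum_{j \in J \cap K} \pm\, p_{J \setminus \{j\}} \cdot q_{K \setminus \{j\}} = 0$. Taking valuations, the minimum of ${\rm val}(p_{J \setminus \{j\}}) + {\rm val}(q_{K \setminus \{j\}})$ over $j \in J \cap K$ must be attained at least twice. Since ${\bf a}_J + {\bf b}_K$ is finite precisely at the positions $j \in J \cap K$ (where its value equals this sum) and $\infty$ elsewhere, this shows ${\bf a}_J$ and ${\bf b}_K$ are tropically orthogonal.

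From $B \subset A^\perp$ and the order-reversing property of $(\cdot)^\perp$ I obtain $A^{\perp\perp} \subset B^\perp = {\rm Trop}(Q)$. The reverse inclusion ${\rm Trop}(Q) \subset A^{\perp\perp}$ is immediate from the tropical orthogonality of ${\rm Trop}(P)$ and ${\rm Trop}(Q)$ recalled just before the theorem (from \cite{BJS, S}): every vector of ${\rm Trop}(Q)$ is tropically orthogonal to every vector of $A^\perp = {\rm Trop}(P)$. The main obstacle I expect is the identification ${\rm Trop}(P_J) = \{{\bf a}_J\}^\perp$ over all charts of $\Real_\infty^n$: when several Pl\"ucker coordinates vanish, or when families of points of $P_J$ have coordinates with valuations tending to $\infty$, one must carefully match the chart-wise closures in Definition~\ref{def:tropicalization} with the $\infty$-entries of ${\bf a}_J$. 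A secondary technicality is tracking signs in the Pl\"ucker relation precisely enough that the vanishing of the classical sum forces the ``minimum attained at least twice'' statement on the tropical side.
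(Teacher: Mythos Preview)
Your construction of $A$ from the valuated Pl\"ucker coordinates of $P$ and your derivation of ${\rm Trop}(P)=A^\perp$ via Kapranov's theorem and Lemma~\ref{le:speyer} match the paper's argument essentially verbatim; the inclusion ${\rm Trop}(Q)\subset A^{\perp\perp}$ from tropical orthogonality of ${\rm Trop}(P)$ and ${\rm Trop}(Q)$ is also the same.

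The genuine difference is in the reverse inclusion $A^{\perp\perp}\subset {\rm Trop}(Q)$. You build the dual family $B$ for $Q$, invoke the duality $q_{K\setminus\{j\}}=\pm p_{K^c\cup\{j\}}$, and then use the quadratic Pl\"ucker relations to show that every ${\bf b}_K$ is tropically orthogonal to every ${\bf a}_J$, giving $B\subset A^\perp$ and hence $A^{\perp\perp}\subset B^\perp={\rm Trop}(Q)$. This is correct (note $|J\cap K|\ge 2$ always, so the ``minimum attained twice'' condition is not vacuous), but it is more work than necessary. The paper observes something simpler: the classical vectors ${\bf p}_J$ themselves are, by construction of (\ref{eq:pluecker}), orthogonal to every element of $P$, hence lie in $Q=P^\perp$; taking valuations gives $A\subset{\rm Trop}(Q)$ directly. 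Since ${\rm Trop}(Q)$ is a tropical linear prevariety (by the symmetric argument it equals $B^\perp$), Proposition~\ref{prop:min_trop} then yields $A^{\perp\perp}\subset{\rm Trop}(Q)$ with no appeal to the quadratic Pl\"ucker relations at all. In effect, the paper exploits that the circuit vectors of $P$ are already vectors of $Q$, while you re-derive their tropical orthogonality to $B$ through the Grassmann--Pl\"ucker identities. Your route is more symmetric and makes the matroid duality explicit; the paper's route is shorter and sidesteps both of the technical obstacles you flag (the sign bookkeeping becomes irrelevant, and Proposition~\ref{prop:min_trop} absorbs the passage from $A$ to $A^{\perp\perp}$).
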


\begin{proof}
Consider the relations (\ref{eq:pluecker}) for the subspace $P$.
For every subset $1 \le j_1 < \cdots < j_{d+1} \le n$
introduce a vector ${\bf p}_{j_1, \ldots ,j_{d+1}} \in {\mathbb F}^n$ such that its $j_i$-coordinate is
$(-1)^i p_{j_1, \ldots ,j_{i-1}, j_{i+1}, \ldots ,j_{d+1}}$
for $1 \le i \le d+1$, and all the rest of coordinates are equal to $0$.
Then every vector in $P_{j_1, \ldots ,j_{d+1}}$ is orthogonal to the vector ${\bf p}_{j_1, \ldots ,j_{d+1}}$, hence
every vector in $\bigcap_{ \{j_1, \ldots ,j_{d+1} \}}P_{j_1, \ldots ,j_{d+1}}$ is orthogonal to
vectors ${\bf p}_{j_1, \ldots ,j_{d+1}}$ for all subsets $1 \le j_1 < \cdots < j_{d+1} \le n$.
By ${\rm val} ({\bf p}_{j_1, \ldots ,j_{d+1}})$ denote the vector in $\Real_\infty ^n$ obtained from
${\bf p}_{j_1, \ldots ,j_{d+1}}$ by taking ${\rm val} (\cdot)$ of all non-zero coordinates and replacing
all zero coordinates by $\infty$.
As $A$ take the set of all vectors ${\rm val} ({\bf p}_{j_1, \ldots ,j_{d+1}}) \in \Real_\infty ^n$
for all $1 \le j_1 < \cdots < j_{d+1} \le n$.
By Definition~\ref{def:trop_hyper},
$$A^\perp =\bigcap_{1 \le j_1 < \cdots < j_d \le n} \bigcup_I{\rm Tropf} (P_{j_1, \ldots ,j_{d+1}} \cap D_I),$$
where the union is taken over all $I \subset \{ 1, \dots ,n \}$ and $D_I$
are the corresponding charts in ${\mathbb F}^n$.
By Kapranov's Theorem,
${\rm Tropf} (P_{j_1, \ldots ,j_{d+1}} \cap D_I)={\rm Trop} (P_{j_1, \ldots ,j_{d+1}} \cap D_I)$,
while, by Definition~\ref{def:tropicalization},
$\bigcup_I{\rm Trop} (P_{j_1, \ldots ,j_{d+1}} \cap D_I)= {\rm Trop} (P_{j_1, \ldots ,j_{d+1}})$.
Then, by Lemma~\ref{le:speyer}, $A^\perp = {\rm Trop}(P)$.

Since the linear subspaces $P$ and $Q$ are orthogonal, for every $\y \in Q$ we have $P \subset \{ \y \cdot \z=0 \}$.
Tropicalizing both sides of this inclusion, we conclude that ${\rm Trop} (Q)$ and ${\rm Trop} (P)$ are tropically
orthogonal.
Hence, ${\rm Trop} (Q) \subset({\rm Trop} (P))^\perp=A^{\perp \perp}$.
Vectors ${\bf p}_{j_1, \ldots ,j_{d+1}}$ lie in $Q$, since they are orthogonal to $P$.
It follows that $A \subset {\rm Trop} (Q)$.
By Proposition~\ref{prop:min_trop}, $A^{\perp \perp} \subset {\rm Trop} (Q)$, since ${\rm Trop} (Q)$ is
a tropical linear prevariety.
We conclude that $A^{\perp \perp}={\rm Trop} (Q)$.
\end{proof}

\begin{corollary}\label{cor:orthog}
Let $P, Q \subset {\mathbb F}^n$ be orthogonal complements  of one another and ${\rm Trop} (P)= A^\perp$.
Then ${\rm Trop} (Q)=A^{\perp \perp}$.
\end{corollary}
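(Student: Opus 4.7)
The plan is to deduce the corollary essentially formally from Theorem~\ref{th:trop_orth}, together with the trivial observation that $(\cdot)^\perp$ is a well-defined operation on subsets of $\Real_\infty^n$, so that equal sets have equal tropical orthogonal complements.

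First I would apply Theorem~\ref{th:trop_orth} to the orthogonal complementary pair $P,Q \subset {\mathbb F}^n$, producing a finite set $B \subset \Real_\infty^n$ with ${\rm Trop}(P) = B^\perp$ and ${\rm Trop}(Q) = B^{\perp\perp}$. Combined with the hypothesis ${\rm Trop}(P) = A^\perp$, this gives the equality of subsets
\[
A^\perp \;=\; B^\perp
\]
in $\Real_\infty^n$. Taking the tropical orthogonal complement of both sides (which is legitimate because $X \mapsto X^\perp$ is defined for arbitrary $X \subset \Real_\infty^n$ in terms of pointwise tropical orthogonality, cf.\ Definition~\ref{def:perp}) yields $A^{\perp\perp} = B^{\perp\perp} = {\rm Trop}(Q)$, which is exactly what the corollary asserts.

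There is no serious obstacle: the content of the corollary is already buried in Theorem~\ref{th:trop_orth}, whose proof produced an \emph{explicit} finite set (namely the vectors ${\rm val}({\bf p}_{j_1,\ldots,j_{d+1}})$ built from Pl\"ucker coordinates) witnessing both equalities simultaneously. The corollary asserts that any other set $A$ representing ${\rm Trop}(P)$ as $A^\perp$ will automatically represent ${\rm Trop}(Q)$ as $A^{\perp\perp}$, and this follows because $A^{\perp\perp}$ depends on $A$ only through $A^\perp$: equal first orthogonals force equal second orthogonals. If one wished, one could phrase this final step via Lemma~\ref{le:inclusions}(2) by noting $A^{\perp\perp} = (A^\perp)^\perp = (B^\perp)^\perp = B^{\perp\perp}$, but no additional machinery is needed beyond the set-theoretic fact just stated.
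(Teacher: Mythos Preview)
Your proof is correct and follows exactly the same route as the paper's: apply Theorem~\ref{th:trop_orth} to obtain a finite $B$ with ${\rm Trop}(P)=B^\perp$ and ${\rm Trop}(Q)=B^{\perp\perp}$, then use $A^\perp=B^\perp$ to conclude $A^{\perp\perp}=B^{\perp\perp}={\rm Trop}(Q)$. The paper's argument is the same three-line deduction, just stated more tersely.
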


\begin{proof}
By Theorem~\ref{th:trop_orth}, there is a finite subset $B \subset \Real_\infty ^n$ such that
${\rm Trop} (P)= B^\perp$ and ${\rm Trop} (Q)= B^{\perp \perp}$.
Since ${\rm Trop} (P)= A^\perp$, we get $A^\perp=B^\perp$, hence ${\rm Trop} (Q)= A^{\perp \perp}$.
\end{proof}

\begin{remark}
Corollary~\ref{cor:orthog} implies, in particular, that in Example~\ref{ex:A_0} the tropical linear prevariety $A_0^\perp$
is not a tropical linear variety.
\end{remark}

\section{Testing membership in a tropical variety and computing a lifting}

For computational purposes, from now on we will assume that the field $K= \overline{\mathbb Q}$, thus
${\mathbb F}:= \overline{\mathbb Q}((t^{1/ \infty}))$ is the field of formal Puiseux series in
a variable $t$ with complex algebraic coefficients.

\begin{definition}\label{def:lifting}
Let $\x \in ({\mathbb Q} \cup \{\infty \})^n$ and ${\bf v} \in {\mathbb F}^n$ such that ${\rm Trop} ({\bf v})=\x$.
Then ${\bf v}$ is called a {\em lifting} of $\x$.
\end{definition}

In this section we describe an algorithm for testing membership of a given point $\x \in {\mathbb Q}^n$ in a tropical variety.
If $\x$ does belong to the variety, the algorithm computes its lifting.
The complexity of the algorithm is singly exponential in $n$.

For elements $\alpha_1, \ldots , \alpha_m \in {\mathbb F}$ that are algebraic over ${\mathbb Q}(t)$ let $\eta$
be the primitive element of the algebraic extension
${\mathbb Q}(t)(\alpha_1, \ldots ,\alpha_m)$ of the field ${\mathbb Q}(t)$, hence
${\mathbb Q}(t)(\alpha_1, \ldots ,\alpha_m)={\mathbb Q}(t)[\eta]$ (see \cite[Theorem~4.6]{Leng}).
The primitive element $\eta$ is defined (up to a conjugacy class) by its minimal (irreducible) polynomial
$\psi \in {\mathbb Z}[t][Z]$, i.e., $\psi (\eta)=0$.
Then $\alpha_i= \zeta_i (\eta)$ for a polynomial $\zeta_i \in {\mathbb Q}(t)[Z]$, for every $1 \le i \le m$.

Consider (classical) polynomials $f_1, \ldots, f_k \in {\mathbb F}[X_1, \ldots ,X_n]$, where the set of all coefficients
$\alpha_1, \ldots, \alpha_m \in {\mathbb F}$ of $f_1, \ldots, f_k$ is represented, as above, via a primitive element.
Let $V(f_1, \ldots ,f_k):= \{ f_1= \cdots =f_k=0 \} \subset {\mathbb F}^n$.
We describe an algorithm, with singly exponential complexity, which for given $f_1, \ldots ,f_k$
and a point $\x=(x_1, \ldots ,x_n) \in {\mathbb Q}^n$
decides whether or not $\x$ belongs to ${\rm Trop}(V(f_1, \ldots ,f_k)) \subset \Real^n$.
Moreover, if $\x \in {\rm Trop}(V(f_1, \ldots ,f_k))$ the algorithm produces a lifting of $\x$
(see Definition~\ref{def:lifting}).
According to \cite{G15}, this computational problem is NP-hard.

Without loss of generality, we assume that $\x= {\bf 0}$, performing if necessary
the coordinate transformation $X_i \to X_i t^{-x_i}$ in polynomials $f_j$, $1 \le j \le k$.
We keep the same notation, $f_j$, $1 \le j \le k$, for polynomials appearing after the change of coordinates.

The rest of the algorithm consists of three parts.
Firstly, it reduces testing whether $\bf 0$ belongs to ${\rm Trop}(V(f_1, \ldots ,f_k))$ to testing whether
$\bf 0$ lies in the tropicalization of one of 0-dimensional algebraic sets.
Next, the algorithm does the latter testing.
Finally, in the case of positive result, it constructs the lifting of $\bf 0$.

\subsection{Reduction to 0-dimensional algebraic sets.}
Applying algorithms from \cite{C86, G86}, represent $V(f_1, \ldots ,f_k)$ as the finite union of its
irreducible over ${\mathbb Q}(t)$ components, $V(f_1, \ldots ,f_k)= \bigcup_\nu V_\nu$.
Each irreducible component is represented in two forms: by a system of equations with coefficients in ${\mathbb Q}(t)$,
and by a generic point (a point such that the field generated by its coordinates has the transcendence degree,
over the field generated by the coefficients of the equations, equal to the dimension of the component).
More precisely, a generic point of an irreducible component $V_\nu$, with $\dim (V_\nu)=s$, is represented as follows.
Among coordinates $X_1, \ldots ,X_n$ a transcendence basis is chosen, let it be $X_1, \ldots ,X_s$.
Then coordinates $X_{s+1}, \ldots ,X_n$ are algebraic over ${\mathbb Q}(X_1, \ldots, X_s)(t)$, and
algorithms from \cite{C86, G86} describe the primitive element $\mu$ of the extension
${\mathbb Q}(X_1, \ldots, X_n)(t)$ as $\mu = \lambda_{s+1} X_{s+1} + \cdots + \lambda_n X_n $ for positive
integers $\lambda_{s+1}, \ldots ,\lambda_n$, and by its minimal polynomial $\chi \in {\mathbb Q}(X_1, \ldots ,X_s)(t)[Z]$.
Each coordinate $X_i$, $s+1 \le i \le n$ is then represented by a polynomial expression in $\mu$ with coefficients
in ${\mathbb Q}(X_1, \ldots ,X_s)(t)$.

Let $Y_0, \ldots ,Y_n$ be new variables.
Fix a component $V_\nu$ with $\dim (V_\nu)>0$, and consider the algebraic set
$\widetilde V_\nu$ defined by the same equations as $V_\nu$ over the field
$\overline{\mathbb Q}(Y_0,Y_1, \ldots ,Y_n)((t^{1/ \infty}))$.
Let
$$\widetilde W_\nu:= \widetilde V_\nu \cap \left\{ Y_0+ \sum_{1 \le i \le n}Y_iX_i=0 \right\}.$$
Then \cite[Proposition~4.6]{JMM} (see also \cite[Theorem~2.7.8]{OP}, \cite[Theorem~3.5]{JY}) implies
that if ${\bf 0} \in {\rm Trop} (V_\nu)$, then ${\bf 0} \in {\rm Trop}(\widetilde W_\nu)$.
The converse statement is obvious.

Let $\dim (V_\nu)=s$.
Introducing new sets of variables, $Y_{p,q}, 0 \le q \le n$, where $1 \le p \le s$, and adding equations
$Y_{p,0}+ \sum_{1 \le i \le n}Y_{p,i}X_i=0$ to the system of equations defining $\widetilde V_\nu$,
we will obtain a system of equations defining a zero-dimensional algebraic set $\widetilde U_\nu$ over the field
$\overline{\mathbb Q}(\{ Y_{p,q} \}_{1 \le p \le s, 0 \le q \le n})((t^{1/ \infty}))$.
Find all points in $\widetilde U_\nu$ using algorithms from \cite{C86, G86}.
The algorithm will conclude that ${\bf 0} \in {\rm Trop}(V_\nu)$ if and only if
for some point ${\bf v} \in \widetilde U_\nu$ its tropicalization is ${\bf 0}$.
We now describe how the algorithm checks the latter condition.

\subsection{Testing existence of the lifting of a point in 0-dimensional algebraic set.}
Algorithms from \cite{C86, G86} represent each point in ${\bf v}=(v_1, \ldots ,v_n) \in \widetilde U_\nu$ as follows.
For every $1 \le i \le n$ the coordinate $v_i$ is represented as $v_i= \xi_i ( \theta)$,
where $\theta$ is a root of an irreducible polynomial $\varphi \in {\mathbb Z}(\{ Y_{p,q} \}_{p, q })(t)[Z]$,
with $\deg_Z(\varphi)= r$, while
the expression $\xi_i ( \theta)$ is of the form
$$
\xi_i ( \theta)= \frac{1}{b_i} \sum_{0 \le \ell < r}c_{i, \ell}\ \theta^\ell,
$$
where $b_i,\ c_{i, \ell} \in {\mathbb Z}[\{ Y_{p,q} \}_{p, q }](t)$.
We can represent $\xi_i ( \theta)$ in a form $\xi_i (\theta)= a_i + R(t)$, where $R(t)=o(1)$ is the series
of terms containing $t$ in positive degree, while $a_i$ is the sum of terms of degrees in $t$ at most $0$.

More precisely, let $\alpha$ be the smallest integral (possibly negative) degree in $t$ of the rational in $t$
coefficients in the expression $\xi_i$.
Let $\beta$ be the degree of the first term in the Puiseux series $\theta=\theta(t)$.
For a positive integer $\gamma \ge \beta$, let $\Theta$ be the sum of all consecutive terms in the Puiseux series
$\theta$ starting with the term of degree $\beta$ and ending with the term of the smallest degree not less than $\gamma$.

If $\beta > 0$, then choose $\gamma$ to be the smallest integer such that $\alpha + \gamma \ge 0$.
Then $\xi_i (\Theta)$ contains all terms in $\xi_i(\theta)$ having non-positive degrees in $t$.

If $\beta \le 0$, then $\Theta^{r-1}$ contains the term with the degree not less than $ \alpha +(r-2) \beta +\gamma$,
hence choosing $\gamma$ to be the smallest integer such that $\alpha +(r-2) \beta + \gamma \ge 0$,
we guarantee that $\xi_i (\Theta)$ contains all terms in $\xi_i(\theta)$ having non-positive degrees in $t$.

Using the algorithm from \cite{C86-2}, find sum $\Theta$ of all first consecutive terms in
the Puiseux expansion of $\theta$ in $t$ up to the term of the smallest degree not less than $\gamma$.
Substituting $\Theta$ into $\xi_i$, we compute the sum $a_i$ of terms in $\xi_i(\theta)$ of degrees in $t$ at most $0$.
Let for each $i$, among the terms in $a_i$, the lowest degree in $t$ be $0$, i.e., $a_i$ is independent of $t$.
Then we conclude that ${\bf 0}$ belongs to ${\rm Trop}(V(f_1, \ldots ,f_k)) \subset \Real^n$.
If among the terms in $a_i$ the lowest degree in $t$ is different from $0$ for some $i$,
then ${\rm Trop} ({\bf v}) \neq {\bf 0}$.
If the latter condition holds for every point ${\bf v} \in \widetilde U_\nu$, then ${\bf 0} \not\in {\rm Trop}(V_\nu)$.

\subsection{Constructing the lifting of a point.}
To find a lifting of $\bf 0$ (in case of $a_i$ independent of $t$ for all $i$), observe that for each $i$
the element $a_i$ belongs to a finite extension of the field ${\mathbb Q}(\{ Y_{p,q} \}_{p, q })$, and is defined by
a polynomial expression via the primitive element of this extension, while the primitive element
is defined by its minimal polynomial (cf. representation of algebraic elements
$\alpha_1, \ldots \alpha_m$ at the beginning of this section).
All coefficients in the polynomial expressions and the minimal polynomial belong to
${\mathbb Q}( \{ Y_{p,q} \}_{p, q })$.
Choose a point $\y \in {\mathbb Z}^{s(n+1)}$ such that
\begin{enumerate}
\item
after the substitution $\y$ in variables $\{ Y_{p,q} \}_{p, q }$ neither of the denominators of these coefficients vanish;
\item
after the substitution $\y$ in variables $\{ Y_{p,q} \}_{p, q }$ neither of the
numerators at leading terms of minimal polynomials vanish.
\end{enumerate}
Coordinates of $\bf y$ can be taken to be non-negative integers not exceeding the degrees of
the polynomials mentioned in (1), (2).
Thus, one can estimate the complexity of choosing point $\bf y$ by the bound on the degrees of these polynomials
which is provided below.

The condition (2) implies that after the substitution of $\y$ in $(Y_1, \ldots, Y_{s(n+1)})$
each minimal polynomial may become reducible but its degree does not change.

Recall that each coordinate $v_i$ in ${\bf v}=(v_1, \ldots ,v_n) \in \widetilde U_\nu$ is represented
by polynomials $\varphi,\ \xi_i \in {\mathbb Q}(\{ Y_{p,q} \}_{p, q })(t)[Z]$.
Substituting in coefficients of these polynomials the vector of variables $(Y_1, \ldots, Y_{s(n+1)})$
by $\y$, we obtain a representation of a point in $(\overline{{\mathbb Q}(t)})^n$ which is a lifting of $\bf 0$.
This ends the description of the algorithm.


\subsection{Complexity of the algorithm.}
Let $d=\max_{1 \le j \le k} \deg_{X_1, \ldots ,X_n} (f_j)$ and $\delta$ be the maximum among the degrees in $t$
of polynomials $\psi, \zeta_i$, which define coefficients $\alpha_i$ of polynomials $f_j$.
Let $M$ be the maximum among the common denominator of rational numbers $x_1, \ldots , x_n$ and
the absolute values of their numerators.
By the Bezout Theorem, the degree of the algebraic set $V:=V(f_1, \ldots ,f_k)$ does not exceed $D:=d^n$.
After the change of coordinates $X_i \to X_i t^{-x_i}$ in polynomials $f_j$, $1 \le j \le k$, the bound
$\delta$ will be multiplied a positive number not exceeding $M^2$.

Algorithms in \cite{C86, G86}, applied to intersections of the irreducible components of $V$
(considered over the field $\overline{\mathbb Q}(\{ Y_{p,q} \}_{p, q })((t^{1/ \infty}))$) with
$s$ hyperplanes
$$\left\{ Y_{p,0}+ \sum_{1 \le i \le n}Y_{p,i}X_i=0 \right\},$$
produce a union of $0$-dimensional irreducible (over ${\mathbb Q}(\{ Y_{p,q} \}_{p,q})(t)$) algebraic sets
(conjugacy classes) each defined by some polynomials
$$g_1, \ldots, g_r \in {\mathbb Q}(\{ Y_{p,q} \}_{p,q})(t)[X_1, \ldots ,X_n],$$
where $r \le D^n$, of degrees at most $D^{n^2}$ with respect to $O(n^2)$ variables $\{ Y_{p,q} \}_{p,q}$,
and with coefficients represented by univariate polynomials $\varphi,\ \xi_i$ of degrees at most $D^2$
with respect to $Z$ and at most $\delta M^2 D^{n^2}$ with respect to $t$.
Complexity of the algorithm from \cite{C86-2}, applied to polynomials $\varphi,\ \xi_i$, is polynomial in
$\delta M D^{n^2}$ and bit-sizes of their rational coefficients.

As a result, the complexity of the algorithm is polynomial in bit-sizes of rational coefficients of
polynomials $\psi, \zeta_i$, which define coefficients $\alpha_i$ of the input polynomials $f_j$,
in the number $k$ of these polynomials, in $\delta M$, and in $D^{n^4}= d^{n^5}$.

We now summarise the results of this section.

\begin{theorem}\label{th:lifting}
There is an algorithm which for given polynomials $f_1, \ldots, f_k \in {\mathbb F}[X_1, \ldots ,X_n]$ and
a point $\x=(x_1, \ldots ,x_n) \in {\mathbb Q}^n$
decides whether or not $\x$ belongs to ${\rm Trop}(V(f_1, \ldots ,f_k)) \subset \Real^n$.
Moreover, if $\x \in {\rm Trop}(V(f_1, \ldots ,f_k))$ the algorithm produces a lifting of $\x$.

To describe the complexity, let $d=\max_{1 \le j \le k} \deg_{X_1, \ldots ,X_n} (f_j)$,
let $\delta$ be the maximum among the degrees in $t$
of polynomials $\psi, \zeta_i$, which define coefficients $\alpha_i$ of polynomials $f_j$.
Let $M$ be the maximum among the common denominator of rational numbers $x_1, \ldots , x_n$ and
absolute values of their numerators.
Then the complexity of the algorithm is polynomial in bit-sizes of rational coefficients of
$\psi, \zeta_i$, in the number $k$ of polynomials $f_j$, in $\delta M$, and in $d^{n^5}$.
\end{theorem}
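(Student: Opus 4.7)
The plan is to carry out the three-stage strategy already outlined in Sections~2.1--2.3 and then to bound the complexity by tracking degree estimates through each stage. First I would reduce to the origin: the monomial change of variables $X_i \mapsto X_i t^{-x_i}$ replaces $V(f_1,\ldots,f_k)$ by a set whose tropicalization contains $\0$ iff the original contains $\x$, and its only effect on the data is a bounded increase in the $t$-degrees of the coefficients, controlled by a factor $M^2$.

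Next I would decompose the shifted algebraic set into its irreducible components over ${\mathbb Q}(t)$ using the algorithms of \cite{C86, G86}, which simultaneously produce, for each component $V_\nu$ of dimension $s$, a generic point described by a primitive element and its minimal polynomial. To reduce to the zero-dimensional case I would introduce fresh parameters $Y_{p,q}$ ($1\le p\le s$, $0\le q\le n$), extend scalars to $\overline{\mathbb Q}(\{Y_{p,q}\})((t^{1/\infty}))$, and intersect $V_\nu$ with the $s$ generic hyperplanes $Y_{p,0}+\sum_i Y_{p,i}X_i=0$. The result is a zero-dimensional set $\widetilde U_\nu$, and \cite[Prop.~4.6]{JMM} ensures that $\0\in{\rm Trop}(V_\nu)$ iff $\0\in{\rm Trop}(\widetilde U_\nu)$. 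Re-running \cite{C86, G86} on $\widetilde U_\nu$ yields each of its finitely many points in primitive-element form over ${\mathbb Q}(\{Y_{p,q}\})(t)$.

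For each such point $\mathbf{v}=(\xi_1(\theta),\ldots,\xi_n(\theta))$, with $\theta$ defined by an irreducible polynomial $\varphi$ of $Z$-degree $r$, I would use the Newton--Puiseux algorithm of \cite{C86-2} to expand $\theta$ to the precision $\gamma$ specified in Section~2.2, chosen to absorb the worst cancellation produced by the factor $\theta^{r-1}$ in $\xi_i$. Substituting the truncation $\Theta$ into $\xi_i$ then recovers exactly the part $a_i$ of the $t$-expansion of the $i$-th coordinate having non-positive $t$-degrees; the point $\0$ lies in ${\rm Trop}(V_\nu)$ precisely when, for some $\mathbf{v}$, every $a_i$ is a nonzero constant in $t$. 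To pass from this parametric verification to an actual lifting in $(\overline{{\mathbb Q}(t)})^n$, I would specialize the $Y_{p,q}$ to an integer vector $\y$ avoiding the vanishing of denominators appearing in $\varphi,\xi_i$ and of the leading coefficients of the relevant minimal polynomials; such a $\y$ exists with entries bounded by the sum of these degrees.

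Finally I would assemble the complexity bound by multiplying the per-stage estimates: B\'ezout gives $\deg V\le D:=d^n$; \cite{C86, G86} applied to the extended system produce at most $D^n$ conjugacy classes, each defined by polynomials of $Z$-degree $\le D^2$, $t$-degree $\le \delta M^2 D^{n^2}$, and $Y$-degree $\le D^{n^2}$; the Puiseux routine \cite{C86-2} runs in time polynomial in $\delta M D^{n^2}$ and the input bit-sizes; the specialization step is polynomial in the same quantities. Composing these yields the stated bound, polynomial in $\delta M$ and in $d^{n^5}$. The main technical obstacle is the correct choice of the precision $\gamma$: one must argue, as in Section~2.2, that a $\gamma$ determined from $\alpha=\min\deg_t\xi_i$, $\beta=\deg_t\theta$, and $r$ suffices to recover all non-positive terms of each $\xi_i(\theta)$ despite the amplification by $\theta^{r-1}$, while simultaneously keeping $\gamma$ within the singly-exponential budget that preserves the overall complexity estimate.
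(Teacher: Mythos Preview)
Your proposal is correct and follows essentially the same approach as the paper: the coordinate shift to $\mathbf{0}$, irreducible decomposition via \cite{C86, G86}, reduction to the zero-dimensional $\widetilde U_\nu$ by intersecting with generic parametric hyperplanes and invoking \cite[Prop.~4.6]{JMM}, the Newton--Puiseux truncation via \cite{C86-2} with the precision $\gamma$ chosen from $\alpha,\beta,r$, the specialization of the $Y_{p,q}$ to integers avoiding the relevant vanishings, and the complexity bookkeeping through $D=d^n$ and the iterated exponents are all exactly as in Sections~2.1--2.4. Your identification of the choice of $\gamma$ as the main technical point also matches the paper's treatment.
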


\begin{remark}
The part of algorithm in the proof of Theorem~\ref{th:lifting}, which checks whether $\bf 0$ is
a tropicalization of a point in $\widetilde U_\nu$, actually computes tropicalization of the (0-dimensional)
$\widetilde U_\nu$.
Since the dimension of a (classical) algebraic set in ${\mathbb F}^n$ coincides with the dimension
of its tropicalization, the same procedure can be used to compute the tropicalization of any 0-dimensional
algebraic set.
\end{remark}

\begin{remark}
In case of tropical {\em linear} varieties $(d=1)$, a deciding algorithm, with complexity similar to the one
in Theorem~\ref{th:lifting}, is given in \cite{G15}.
\end{remark}

\section{Criterion and deciding algorithm for being a tropical linear variety}

Let $A \subset ({\mathbb Q} \cup \{\infty \})^n$ be a set of $k$ vectors with bit-sizes of coordinates not exceeding $L$.
Since $A^\perp$ and $A^{\perp \perp}$ are tropical linear prevarieties (see Proposition~\ref{prop:min_trop}),
Theorem~\ref{th:main} implies that
$$A^\perp={\rm Trophull}(\{ \x_1, \ldots ,\x_p \})\quad \text{and}\quad
A^{\perp \perp}={\rm Trophull}(\{ \y_1, \ldots ,\y_q \})$$
for some vectors $\x_1, \ldots ,\x_p, \y_1, \ldots ,\y_q \in ({\mathbb Q} \cup \{\infty \})^n$.
Moreover, the algorithm, stated in Theorem~\ref{th:main}, computes vectors $\x_1, \ldots ,\x_p$,
with complexity polynomial in $L, n^k$, and vectors $\y_1, \ldots ,\y_q$
with complexity polynomial in $L, n^{n^k}$.

\begin{theorem}\label{th:criterion}
The following three statements are equivalent.
\begin{enumerate}
\item
There exist mutually complementary and orthogonal linear subspaces $P, Q$ in ${\mathbb F}^n$ such that
$A^\perp ={\rm Trop}(P)$, and $A^{\perp \perp} ={\rm Trop} (Q)$
(in particular, $A^\perp$ , $A^{\perp \perp}$ are tropical linear {\em varieties}).
\item
There exist liftings
$${\bf v}_1, \ldots {\bf v}_p, {\bf w}_1, \ldots ,{\bf w}_q \in {\mathbb F}^n\quad \text{of vectors}\quad
\x_1, \ldots \x_p, \y_1, \ldots ,\y_q \in ({\mathbb Q} \cup \{\infty \})^n$$
respectively, such that $({\bf v}_i, {\bf w}_j)=0$ for all $1 \le i \le p,\> 1 \le j \le q$.
\item
$A^\perp$ is a tropical linear {\em variety}.
\end{enumerate}
\end{theorem}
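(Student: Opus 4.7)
The plan is to establish the triangle of implications $(1)\Rightarrow(3)$, $(3)\Rightarrow(1)$, $(1)\Rightarrow(2)$, and $(2)\Rightarrow(1)$, with $(2)\Rightarrow(1)$ as the substantive step.

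First, $(1)\Rightarrow(3)$ is immediate from the definition of a tropical linear variety. For $(3)\Rightarrow(1)$, given $A^\perp = {\rm Trop}(P)$ for some linear subspace $P\subset{\mathbb F}^n$, I would take $Q$ to be the orthogonal complement of $P$; Corollary~\ref{cor:orthog} then supplies ${\rm Trop}(Q)=A^{\perp\perp}$. For $(1)\Rightarrow(2)$, since the value group of ${\mathbb F}$ is ${\mathbb Q}\cup\{\infty\}$, each $\x_i\in A^\perp={\rm Trop}(P)$ admits a lifting ${\bf v}_i\in P$ in the sense of Definition~\ref{def:lifting} (selecting the chart $D_{i_1,\ldots,i_r}$ of Definition~\ref{def:chart2} so that infinite coordinates of $\x_i$ correspond to zero coordinates of ${\bf v}_i$), and similarly each $\y_j$ lifts to some ${\bf w}_j\in Q$; the orthogonality $({\bf v}_i,{\bf w}_j)=0$ is inherited from $P\perp Q$.

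The main step is $(2)\Rightarrow(1)$. Starting from the liftings, I would set
\[
P:={\rm span}_{\mathbb F}({\bf v}_1,\ldots,{\bf v}_p),\qquad Q:={\rm span}_{\mathbb F}({\bf w}_1,\ldots,{\bf w}_q).
\]
By bilinearity, $P\perp Q$, hence $\dim P+\dim Q\le n$. Since ${\bf v}_i\in P$ gives $\x_i\in{\rm Trop}(P)$, and Theorem~\ref{th:trop_orth} guarantees that ${\rm Trop}(P)$ is a tropical linear prevariety, the set ${\rm Trop}(P)$ contains the tropical linear hull $A^\perp={\rm Trophull}(\{\x_1,\ldots,\x_p\})$; symmetrically $A^{\perp\perp}\subseteq{\rm Trop}(Q)$. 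Combining the equality $\dim({\rm Trop}(P))=\dim P$ (recalled just before Theorem~\ref{th:trop_orth}) with the inequality $\dim A^\perp+\dim A^{\perp\perp}\ge n$ of Lemma~12 of \cite{GP} cited after Proposition~\ref{prop:min_trop}, I would chain
\[
n\le\dim A^\perp+\dim A^{\perp\perp}\le\dim{\rm Trop}(P)+\dim{\rm Trop}(Q)=\dim P+\dim Q\le n,
\]
forcing equality throughout, so in particular $P$ and $Q$ are mutually complementary. Theorem~\ref{th:trop_orth} then supplies a finite $B\subset\Real_\infty^n$ with ${\rm Trop}(P)=B^\perp$ and ${\rm Trop}(Q)=B^{\perp\perp}$. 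The chain $A\subseteq A^{\perp\perp}\subseteq{\rm Trop}(Q)=B^{\perp\perp}$, together with the monotonicity of $(\cdot)^\perp$ and Lemma~\ref{le:inclusions}(2), yields $B^\perp=B^{\perp\perp\perp}\subseteq A^\perp$, that is ${\rm Trop}(P)\subseteq A^\perp$. Combined with the reverse inclusion already in hand this forces ${\rm Trop}(P)=A^\perp$, and hence ${\rm Trop}(Q)=B^{\perp\perp}=A^{\perp\perp}$, establishing (1).

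The main obstacle I expect is the dimension-counting step: it relies on the external inequality from \cite{GP} and on the Bieri--Groves-type equality $\dim({\rm Trop}(P))=\dim P$, neither of which is developed in the present paper. Once these two inputs are granted, the proof closes cleanly via the perp/hull duality of Lemma~\ref{le:inclusions} and the existence statement of Theorem~\ref{th:trop_orth}; the only further subtlety, in $(1)\Rightarrow(2)$, is matching infinite coordinates of $\x_i$ to zero coordinates of ${\bf v}_i$ uniformly across the liftings, which is handled by selecting the appropriate chart $D_{i_1,\ldots,i_r}$ for each point.
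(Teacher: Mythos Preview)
Your argument is correct and closes the circle of implications, but your $(2)\Rightarrow(1)$ takes a genuinely different route from the paper's. The paper proves the two inclusions in the opposite order: it first shows ${\rm Trop}(P)\subset A^\perp$ directly via $A^{\perp\perp\perp}=A^\perp$, and then establishes $A^\perp\subset{\rm Trop}(P)$ by an explicit chart-by-chart construction---given $\x=\min_i\{r_i{\bf 1}_n+\x_i\}$ with rational $r_i$, it exhibits a generic combination $\sum_i s_i t^{r_i}{\bf v}_i\in P$ whose tropicalization is $\x$, then passes to real $r_i$ by closure. Only afterwards does it prove complementarity, and for that it uses the tropical-rank inequalities ${\rm rk}(\{{\bf v}_i\})\ge{\rm trk}(\{\x_i\})$ from \cite{DSS} rather than the Bieri--Groves equality $\dim{\rm Trop}(P)=\dim P$ that you invoke. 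Your approach is slicker: by first getting $A^\perp\subset{\rm Trop}(P)$ from the fact that a tropical linear prevariety is closed under ${\rm Trophull}$, then forcing complementarity by the dimension squeeze, you can feed the now-complementary pair $(P,Q)$ back into Theorem~\ref{th:trop_orth} and read off the reverse inclusion from pure $\perp$-duality, bypassing the explicit lifting construction entirely. The price is that you lean on $\dim{\rm Trop}(P)=\dim P$ for an \emph{arbitrary} linear $P$ (the paper only records this for a complementary pair, though it holds in general), whereas the paper's rank argument stays within the cited matrix-rank lemmas; conversely, the paper's hands-on construction, while longer, gives a concrete recipe for lifting any $\x\in A^\perp$ into $P$ once the ${\bf v}_i$ are known. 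One minor point: when you first invoke Theorem~\ref{th:trop_orth} to say ${\rm Trop}(P)$ is a prevariety, you should apply it to $P$ together with its genuine orthogonal complement $P^\perp$, not with your $Q$, since at that stage $Q=P^\perp$ is not yet established.
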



For the proof of this theorem we recall the following definition.

\begin{definition}[\cite{DSS}]\label{def:trop_sing}
A square $(r \times r)$-matrix with elements $m_{ij} \in \Real_\infty,\ 1 \le i,j \le r$, is called
{\em tropically singular} if the minimum among
$$
\{ m_{1 \sigma_1}+ \cdots +m_{r \sigma_r} |\>
\text{for all permutations}\> (\sigma_1, \ldots ,\sigma_r)\> \text{of}\> (1, \ldots, r)\}
$$
is attained on at least two elements, and {\em tropically non-singular} otherwise.
The {\em tropical rank}, ${\rm trk}(M)$, of a matrix $M$ is the largest integer $r$ such that this matrix has a non-singular
$(r \times r)$-submatrix.
\end{definition}

\begin{proof}[Proof of Theorem~\ref{th:criterion}]
Implication $(1) \Rightarrow (2)$.
Suppose there exist subspaces $P, Q \subset {\mathbb F}^n$ as in (1).
Then arbitrary liftings ${\bf v}_1, \ldots ,{\bf v}_p \in P$ and ${\bf w}_1, \ldots ,{\bf w}_q \in Q$,
of $\x_1, \ldots ,\x_p$ and $\y_1, \ldots ,\y_q$ respectively, satisfy (2).

We now prove implication $(2) \Rightarrow (1)$.
Let $P$ (respectively, $Q$) be the linear hull of ${\bf v}_1, \ldots ,{\bf v}_p$ (respectively, of
${\bf w}_1, \ldots ,{\bf w}_q$).
Since $({\bf v}_i, {\bf w}_j)=0$ for all $1 \le i \le p,\> 1 \le j \le q$, each vector ${\bf v} \in P$ is orthogonal
to each vector ${\bf w}_j$, in particular, subspaces  $P$ and $Q$ are orthogonal.
Hence, ${\rm Trop} ({\bf v}) \perp \y_j$ for each $1 \le j \le q$.
Then ${\rm Trop} ({\bf v}) \perp \y$ for each
$\y \in {\rm Trophull} (\{ \y_1, \ldots ,\y_q \})= A^{\perp \perp}$,
thus ${\rm Trop} ({\bf v}) \in A^{\perp \perp \perp}$.
By Lemma~\ref{le:inclusions}, (2), this means ${\rm Trop} ({\bf v}) \in A^\perp$, thus ${\rm Trop}(P) \subset A^\perp$.
Similarly, we can prove that ${\rm Trop}(Q) \subset A^{\perp \perp}$.

We prove the inclusion $A^{\perp} \subset {\rm Trop}(P)$ by restricting it to charts in $\Real^n_\infty$
(see Definition~\ref{def:chart1}).
Choose a chart $C_I$ in $\Real^n_\infty$, where $I \subset \{ 1, \ldots ,n \}$, in $\Real^n$, and the corresponding
chart $D_I$ in ${\mathbb F}^n$ (see Definition~\ref{def:chart2}).

Take $\x \in A^{\perp} \cap C_I$.

Consider first the case when $\x=(x_1, \ldots ,x_n)$ is representable, via generators of $A^{\perp}$, as
$\x= \min_{1 \le i \le p}  \{ r_i{\bf 1}_n +\x_i \}$ with $r_i \in {\mathbb Q}$, where the minimum is taken component-wise.
Recall that $P$ is the linear hull of ${\bf v}_1, \ldots ,{\bf v}_p \in {\mathbb F}^n$,
let ${\bf v}_i=(v_{i1}, \ldots ,v_{in})$.
For each $1 \le j \le n$ let $\alpha_{1j}, \ldots ,\alpha_{pj}$ be coefficients of lowest degrees in Puiseux series
$v_{1j}t^{r_1}, \ldots ,v_{pj}t^{r_p}$ respectively.
Choose $s_1, \ldots ,s_p \in {\mathbb C}$ so that $s_1 \alpha_{1j}+ \cdots + s_p \alpha_{pj} \neq 0$ for each $1 \le j \le n$.
Then
$${\rm Trop} (s_1 v_{1j}t^{r_1}+ \cdots +s_p v_{pj}t^{r_p})= \min \{ x_{1j}+r_1, \ldots ,x_{pj}+r_p \}= x_j,$$
i.e., $\x=(x_1, \ldots ,x_n) \in {\rm Trop} (P \cap D_I)$.

In case when in the representation $\x= \min_{1 \le i \le p}  \{ r_i{\bf 1}_n +\x_i \}$
elements $r_i$ belong to $\Real$, consider a sequence of points
$\left\{\min_{1 \le i \le p}  \{ r^{(\ell)}_i{\bf 1}_n +\x_i \} \right\}_{1 \le \ell < \infty}$
in $A^\perp \cap C_I$ converging to $\x$, such that each $r^{(\ell)} \in {\mathbb Q}$.
Then $\x \in {\rm Trop} (P \cap D_I)$, because ${\rm Trop} (P \cap D_I)$ is closed.

Taking the union over all subsets $I \subset \{ 1, \ldots ,n \}$, in the inclusions
$A^\perp \cap C_I \subset {\rm Trop}(P \cap D_I)$ we get $A^\perp \subset {\rm Trop}(P)$.

Similarly, we can prove that $A^{\perp \perp} \subset {\rm Trop}(Q)$.

We conclude that $A^{\perp} = {\rm Trop}(P)$ and $A^{\perp \perp} = {\rm Trop}(Q)$.

We now prove that subspaces $P$ and $Q$ are mutually complementary.
In \cite{GP} it is shown that $\dim (A^\perp) + \dim (A^{\perp \perp}) \ge n$.
Hence, by \cite[Theorem~4.2]{DSS},
$${\rm trk}(\{ \x_1, \ldots ,\x_p \}) + {\rm trk}(\{ \y_1, \ldots ,\y_q \})= \dim (A^\perp)
+ \dim (A^{\perp \perp}) \ge n.$$
On the other hand, \cite{DSS} implies that
$${\rm rk}(\{ {\bf v}_1, \ldots ,{\bf v}_p \}) \ge
{\rm trk}( \{ \x_1, \ldots ,\x_p \})\quad \text{and}\quad {\rm rk}(\{ {\bf w}_1, \ldots ,{\bf w}_p \}) \ge
{\rm trk}( \{ \y_1, \ldots ,\y_q \}).$$
It follows that
${\rm rk}(\{ {\bf v}_1, \ldots ,{\bf v}_p \}) + {\rm rk}(\{ {\bf w}_1, \ldots ,{\bf w}_p \}) \ge n$,
hence $P$ and $Q$ are mutually complementary, given that they are orthogonal.

Finally, implication $(1) \Rightarrow (3)$ is trivial.
To prove $(3) \Rightarrow (1)$ notice that since there is a subspace $P \subset {\mathbb F}^n$ such that
${\rm Trop} (P)=A^\perp$, by Corollary~\ref{cor:orthog}, ${\rm Trop} (Q)=A^{\perp \perp}$ for the
subspace $Q$ complement orthogonal to $P$.
\end{proof}

\begin{corollary}\label{cor:deciding}
There is an algorithm which for given tropical linear {\em prevarieties}
$$A^\perp= {\rm Trophull} (\{ \x_1, \ldots ,\x_p \})\> \text{and}\
A^{\perp \perp}= {\rm Trophull} (\{ \y_1, \ldots ,\y_q \}),$$
where $\x_1, \ldots ,\x_p, \y_1, \ldots ,\y_q \in ({\mathbb Q} \cup \{ \infty \})^n$,
decides whether $A^\perp$ is a tropical linear {\em variety}.
The complexity of the algorithm is exponential in bit-sizes of rational coordinates of vectors $\x_i, \y_j$,
$1 \le i \le p,\ 1 \le j \le q$, and in $n,p,q$.

\end{corollary}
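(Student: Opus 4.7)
The plan is to invoke Theorem~\ref{th:criterion} to reduce the decision to an algebraic feasibility problem, and then to solve that problem using the algorithm of Theorem~\ref{th:lifting}.

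By the equivalence $(2) \Leftrightarrow (3)$ in Theorem~\ref{th:criterion}, $A^\perp$ is a tropical linear variety if and only if there exist liftings ${\bf v}_1, \ldots ,{\bf v}_p, {\bf w}_1, \ldots ,{\bf w}_q \in {\mathbb F}^n$ of $\x_1, \ldots ,\x_p, \y_1, \ldots ,\y_q$ respectively, such that $({\bf v}_i, {\bf w}_j)=0$ for all $1 \le i \le p,\ 1 \le j \le q$. I would encode the coordinates of these candidate liftings by fresh indeterminates, and write out the $pq$ bilinear orthogonality equations. Any coordinate of some $\x_i$ or $\y_j$ equal to $\infty$ forces the corresponding indeterminate to be $0 \in {\mathbb F}$, so I would substitute $0$ for such indeterminates, obtaining a reduced system involving only the finite-valued coordinates.

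Let $V \subset {\mathbb F}^N$ denote the resulting classical algebraic set, where $N \le n(p+q)$ counts the surviving indeterminates, and let $\x^\ast \in {\mathbb Q}^N$ be the concatenation of the corresponding finite coordinates of $\x_1, \ldots ,\x_p, \y_1, \ldots ,\y_q$. The algorithm simply applies Theorem~\ref{th:lifting} to $V$ and $\x^\ast$: if $\x^\ast \in {\rm Trop}(V)$, the produced lifting, padded by zeros in the positions originally equal to $\infty$, realizes condition (2) of Theorem~\ref{th:criterion} and certifies that $A^\perp$ is a tropical linear variety; if $\x^\ast \notin {\rm Trop}(V)$, then no such liftings exist and $A^\perp$ is not a tropical linear variety.

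For the complexity estimate, the defining polynomials of $V$ are bilinear with coefficients $\pm 1$, so $d=2$ and the parameter $\delta$ from Theorem~\ref{th:lifting} equals $0$; the number $k$ of polynomials is $pq$, the number of variables is at most $n(p+q)$, and the parameter $M$ is controlled by the bit-sizes of the rational coordinates of the $\x_i, \y_j$. Substituting into the bound of Theorem~\ref{th:lifting} yields the claimed complexity, exponential in the bit-sizes and in $n,p,q$. The main obstacle I anticipate is purely bookkeeping: one must organize the substitution of $0$ for each $\infty$-coordinate carefully enough that the reduced bilinear system over ${\mathbb F}$ faithfully encodes the original tropical condition (2), so that lifting existence for $\x^\ast$ in $V$ is exactly equivalent to the desired joint lifting with orthogonality; once this correspondence is set up cleanly, the rest is a direct appeal to Theorems~\ref{th:criterion} and~\ref{th:lifting}.
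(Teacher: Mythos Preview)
Your proposal is correct and follows essentially the same route as the paper: reduce via Theorem~\ref{th:criterion} to the existence of orthogonal liftings, encode this as a bilinear system in the finite-valued coordinates (substituting $0$ for the $\infty$ ones), and decide membership of the concatenated point in the tropicalization of that system using Theorem~\ref{th:lifting}. The paper's proof is organized in exactly this way, and the bookkeeping concern you flag is the only subtlety, handled there just as you describe.
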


\begin{proof}
The input of the algorithm under construction is the set
$$\{ \x_1, \ldots ,\x_p, \y_1, \ldots ,\y_q \} \subset ({\mathbb Q} \cup \{ \infty \})^n.$$

Consider, over ${\mathbb F}^\ast \cong {\mathbb F} \setminus \{0 \}$, the system of equations
\begin{equation}\label{eq:dot_product}
\sum_{1 \le \nu \le n} V_{i \nu} W_{j \nu}=0
\end{equation}
for all $1 \le i \le p$, $1 \le j \le q$ such that
$x_{i \nu} \neq \infty$ and $y_{j \nu} \neq \infty$ for all $1 \le \nu \le n$,
where $(V_{i1}, \ldots ,V_{in}), (W_{j1}, \ldots ,W_{jn})$ are vectors of variables.

Consider vectors $\x_i$, $\y_j$, $1 \le i \le p$, $1 \le j \le q$ in the input,
from which coordinates $x_{i \nu}$ and $y_{j \nu}$ with either $x_{i \nu}= \infty$ or $y_{j \nu}= \infty$ are removed.
Applying Theorem~\ref{th:lifting}, the algorithm checks whether there exist liftings of these vectors,
satisfying the system of equations (\ref{eq:dot_product}).
If no, then $A^\perp$ is not a tropical linear variety, by Theorem~\ref{th:criterion}.
Otherwise, let $P$ be the linear hull of vectors ${\bf v}_1, \ldots {\bf v}_p$ such that in every ${\bf v}_i$
each coordinate corresponding to $x_{i \nu} \neq \infty$ is the lifting $v_{i \nu}$, while
each coordinate corresponding to $x_{i \nu}= \infty$ is $0$.
Similarly, let $Q$ be the linear hull of vectors ${\bf w}_1, \ldots {\bf w}_q$ such that in every ${\bf w}_j$
each coordinate corresponding to $y_{j \nu} \neq \infty$ is the lifting $w_{j \nu}$, while
each coordinate corresponding to $y_{j \nu} = \infty$ is $0$.
Then, by Theorem~\ref{th:criterion}, $P$ and $Q$ are mutually complementary and orthogonal linear subspaces
of ${\mathbb F}^n$, while $A^\perp ={\rm Trop}(P)$ and $A^{\perp \perp} ={\rm Trop} (Q)$.
In particular, $A^\perp$ and $A^{\perp \perp}$ are tropical linear varieties.

Theorem~\ref{th:lifting} implies that the complexity of the algorithm is exponential in bit-sizes
of rational coordinates of vectors $\x_i$, $1 \le i \le p$, $\y_j$, $1 \le j \le q$, and in $n,p,q$.
\end{proof}

\begin{remark}
There is an alternative algorithm for the problem described in Corollary~\ref{cor:deciding}.
Using \cite{BJS, HT} or \cite{KK}, it constructs a tropical basis
of the system of equations (\ref{eq:dot_product}), which is a finite set of polynomials $H_\ell$,
with integer coefficients and $(p+q)n$
variables $V_{i \nu}, W_{j \nu}$ such that $x_{i \nu} \neq \infty$ and $y_{j \nu} \neq \infty$.
(See a detailed definition and properties of a tropical basis in \cite[Section~2.6]{MS}.)
The algorithm checks whether, for all $H_\ell$, vectors $\x_1, \ldots \x_p, \y_1, \ldots ,\y_q$,
from which coordinates $x_{i \nu}$ and $y_{j \nu}$ with either $x_{i \nu}= \infty$ or $y_{j \nu}= \infty$
are removed, satisfy tropicalizations ${\rm Trop}(H_\ell)$.
If yes, then, by the definition of tropical basis, there exist liftings $v_{i \nu}, w_{j \nu} \in {\mathbb F}^\ast$
of all $x_{i \nu} \neq \infty, y_{j \nu} \neq \infty$ respectively, which satisfy the system (\ref{eq:dot_product}).
Then the algorithm continues as the algorithm from Corollary~\ref{cor:deciding}.
If vectors $\x_1, \ldots \x_p, \y_1, \ldots ,\y_q$ from which coordinates $x_{i \nu}$ and $y_{j \nu}$ with either
$x_{i \nu}= \infty$ or $y_{j \nu}= \infty$ are removed, do not satisfy ${\rm Trop}(H_\ell)$ for all $H_\ell$,
then $A^\perp$ is not a tropical linear variety, by Theorem~\ref{th:criterion}.

The complexity of this algorithm is polynomial in the bit-size of rational coordinates of vectors $\x_i$,
$1 \le i \le p$, $\y_j$, $1 \le j \le q$.
The complexity also depends on the complexity of computing of a tropical basis for (\ref{eq:dot_product}).
Apparently, the latter complexity is doubly exponential in $n, p, q$, though we are unaware of a proof
of this bound in literature.
Note that there is a doubly exponential upper bound on the {\em degree} of a tropical basis
\cite{JS} (cf. also \cite{MR, GRS}).
\end{remark}

\begin{corollary}
There is an algorithm which for tropical linear {\em prevarieties}
$$A^\perp= {\rm Trophull} (\{ \x_1, \ldots ,\x_p \})\> \text{and}\>
A^{\perp \perp}= {\rm Trophull} (\{ \y_1, \ldots ,\y_q \}),$$
where $\x_1, \ldots ,\x_p, \y_1, \ldots ,\y_q \in ({\mathbb Q} \cup \{ \infty \})^n$,
produces bases of linear subspaces $P$ and $Q$, such that $A^\perp= {\rm Trop} (P)$ and $A^{\perp \perp}= {\rm Trop} (Q)$,
in case these subspaces exist.
The complexity of the algorithm is exponential in bit-sizes of rational coordinates of vectors $\x_i$,
$1 \le i \le p$, $\y_j$, $1 \le j \le q$, and in $n,p,q$.
\end{corollary}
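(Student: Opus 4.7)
The plan is to piggyback on the deciding algorithm from Corollary~\ref{cor:deciding}. That algorithm either certifies that $A^\perp$ is not a tropical linear variety, in which case by Theorem~\ref{th:criterion} the desired subspaces $P, Q$ do not exist, or it explicitly produces liftings ${\bf v}_1, \ldots ,{\bf v}_p$ and ${\bf w}_1, \ldots ,{\bf w}_q$ in ${\mathbb F}^n$ of the generators $\x_1, \ldots ,\x_p$ and $\y_1, \ldots ,\y_q$ satisfying $({\bf v}_i, {\bf w}_j)=0$ for all $i,j$. So the algorithm to be designed essentially repackages the output of Corollary~\ref{cor:deciding}: the only thing it still owes is passing from spanning sets to bases.

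First, I would invoke the algorithm from Corollary~\ref{cor:deciding}. If it returns a negative answer, halt. Otherwise, by the proof of the implication $(2) \Rightarrow (1)$ in Theorem~\ref{th:criterion}, the linear hulls $P$ and $Q$ of the produced liftings ${\bf v}_1, \ldots ,{\bf v}_p$ and ${\bf w}_1, \ldots ,{\bf w}_q$ are mutually complementary, mutually orthogonal, and satisfy $A^\perp = {\rm Trop}(P)$ and $A^{\perp \perp} = {\rm Trop}(Q)$. Hence it suffices to extract a maximal linearly independent subset of each spanning set.

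Second, I would apply Gaussian elimination over the field ${\mathbb F}$ to the $(p \times n)$-matrix with rows ${\bf v}_1, \ldots ,{\bf v}_p$, and to the $(q \times n)$-matrix with rows ${\bf w}_1, \ldots ,{\bf w}_q$. The coordinates $v_{i \nu}, w_{j \nu}$ are produced by the lifting subroutine of Theorem~\ref{th:lifting} as polynomial expressions in a primitive element of a finite algebraic extension of ${\mathbb Q}(t)$; thus all ${\mathbb F}$-operations reduce to polynomial arithmetic modulo the relevant minimal polynomials, and a zero-test for an entry reduces to reducing the corresponding polynomial modulo the minimal polynomial and checking whether the result is $0$. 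Gaussian elimination then outputs the desired bases of $P$ and $Q$.

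The complexity is dominated by the call to Corollary~\ref{cor:deciding}, whose cost is already exponential in the bit-sizes of the rational coordinates of the $\x_i, \y_j$ and in $n, p, q$; Gaussian elimination over the explicit representation adds only polynomial overhead in the size of that representation. The main technical obstacle, and the only place where care is needed, is bookkeeping for the representation of elements of ${\mathbb F}$ encountered during elimination: successive row operations may require combining primitive elements of different algebraic extensions into a common one. This is handled by the standard primitive element construction already used in the proof of Theorem~\ref{th:lifting}, and the resulting bit-size bounds remain within the stated complexity, since only $O(\min(p,n))$ and $O(\min(q,n))$ elimination steps are performed.
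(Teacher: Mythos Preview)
Your proposal is correct and follows essentially the same route as the paper: both apply the lifting subroutine of Theorem~\ref{th:lifting} (you via Corollary~\ref{cor:deciding}, the paper directly) to system~(\ref{eq:dot_product}), take the linear hulls of the resulting liftings as $P$ and $Q$, and then extract maximal linearly independent subsets to get bases. Your discussion of Gaussian elimination over ${\mathbb F}$ and the bookkeeping for primitive elements is more detailed than the paper's single-sentence ``choosing maximal linearly independent vectors,'' but the underlying argument is the same.
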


\begin{proof}
Apply the algorithm from Theorem~\ref{th:lifting} to vectors $\x_i$, $\y_j$, $1 \le i \le p$, $1 \le j \le q$,
from which coordinates $x_{i \nu}$ and $y_{j \nu}$ with either $x_{i \nu}= \infty$ or $y_{j \nu}= \infty$ are removed,
and to the system (\ref{eq:dot_product}).
The algorithm from Theorem~\ref{th:lifting} will either produce liftings $v_{i \nu}, w_{j \nu} \in {\mathbb F}^\ast$
of all $x_{i \nu} \neq \infty, y_{j \nu} \neq \infty$ respectively, which satisfy (\ref{eq:dot_product}), or will
indicate that liftings do not exist, i.e., vectors $\x_i, \y_j$ with removed coordinates do not belong to
the tropicalization of (\ref{eq:dot_product}).
If liftings exist, then $P$ is the linear hull of vectors ${\bf v}_1, \ldots {\bf v}_p$ such that in every ${\bf v}_i$
each coordinate corresponding to $x_{i \nu} \neq \infty$ is the lifting $v_{i \nu}$ of $x_{i \nu}$, while
each coordinate corresponding to $x_{i \nu}= \infty$ is $0$.
Similarly, $Q$ is the linear hull of vectors ${\bf w}_1, \ldots {\bf w}_q$ such that in every ${\bf w}_j$
each coordinate corresponding to $y_{j \nu} \neq \infty$ is the lifting $w_{j \nu}$ of $y_{j \nu}$, while
each coordinate corresponding to $y_{j \nu} = \infty$ is $0$.
Herewith, all coordinates of vectors ${\bf v}_i, {\bf w}_j$ are Puiseux series in
$\overline{\mathbb Q}((t^{1/\infty}))$ represented as algebraic elements over the field ${\mathbb Q}(t)$,
as described in the proof of Theorem~\ref{th:lifting}.
Choosing maximal linearly independent vectors among ${\bf v}_1, \ldots {\bf v}_p$ (respectively, among
${\bf w}_1, \ldots {\bf w}_q$), we get a basis of $P$ (respectively, of $Q$).

Theorem~\ref{th:lifting} implies that the complexity of this algorithm is
exponential in the bit-size of rational coordinates of vectors $\x_i$,
$1 \le i \le p$, $\y_j$, $1 \le j \le q$, and exponential in $n, p, q$.
\end{proof}

\section{Infinite intersections of tropical linear prevarieties}\label{sec:infinite}

By the definition, the intersection of a finite number of tropical hyperplanes is a tropical linear prevariety.
In this section we give an example of a {\em countable} family of tropical hyperplanes in $\Real_\infty ^6$
such that their intersection is not a finite union of convex polyhedra, in particular, not a tropical prevariety.
This strengthens examples in \cite{DW} (example of T.~Theobald) and \cite{GV},
in which intersections of a countable families of tropical
(non-linear) prevarieties were shown not to be finite unions of convex polyhedra.

Choose a sequence $\{ \eps_i \}_{i=1}^\infty$ of pair-wise distinct real numbers $\eps_i$ such that $0< \eps_i <1/4$, and
consider the tropical hyperplane $L_i \subset \Real_\infty ^6$ defined by the set
$${\mathcal L}_i(x_1,x_2,y_1,y_2,z_1,z_2):=\{ -i+x_1, -i+x_2, -i/2- \eps_i +y_1, -i/2+y_2, z_1, z_2 \}$$
(see Definition~\ref{def:hyperplane}).

Let
$$M:=\bigcap_{1 \le i <\infty}L_i  \subset \Real_\infty ^6.$$

\begin{proposition}
The set $M \cap \Real^6$ is not a finite union of convex polyhedra.
In particular, $M$ is not a tropical prevariety.
\end{proposition}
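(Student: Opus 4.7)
The plan is to exhibit a $2$-dimensional affine slice $S \subset \Real^6$ in which $M \cap S$ has infinitely many corner points, contradicting any representation as a finite union of convex polygons (and hence of $M \cap \Real^6$ as a finite union of convex polyhedra). Concretely, I set
\[ S := \{ (2t, 2t, y_1, t, 0, 0) : t, y_1 \in \Real \} \cong \Real^2 \]
and, for each integer $j \ge 1$, define the point $q_j := (j, j, j/2 + \eps_j, j/2, 0, 0) \in S$, which sits at intrinsic coordinates $(t, y_1) = (j/2, j/2 + \eps_j)$.

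First I would verify that $q_j \in M$. Evaluating ${\mathcal L}_{j'}$ at $q_j$ gives the six values $j - j',\ j - j',\ (j-j')/2 + \eps_j - \eps_{j'},\ (j-j')/2,\ 0,\ 0$. The hypothesis $0 < \eps_i < 1/4$ yields $|\eps_j - \eps_{j'}| < 1/4 < 1/2 \le (j' - j)/2$ for $j' > j$, so the two $x$-entries are strictly smallest; for $j' < j$ the two $z$-entries are strictly smallest; for $j' = j$ all six values equal $0$. In each case the minimum is attained at least twice, so $q_j \in L_{j'}$ for all $j' \ge 1$.

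Next I would analyze the local geometry of $M \cap S$ at $q_j$. In local coordinates $u := t - j/2$ and $v := y_1 - (j/2 + \eps_j)$, the six entries of ${\mathcal L}_j$ become $2u, 2u, v, u, 0, 0$, and a direct case check (which of $\{2u, u, v, 0\}$ is the minimum) shows that $L_j$ is satisfied precisely on $\{u \le 0,\, v \ge 2u\} \cup \{u \ge 0,\, v \ge 0\}$. Its boundary near $q_j$ consists of two rays, $\{v = 0,\, u \ge 0\}$ and $\{v = 2u,\, u \le 0\}$, meeting at $q_j$ with distinct slopes $0$ and $2$ -- a genuine concave corner. A uniform estimate (say within the neighborhood $|u|, |v| \le 1/12$) then shows that every other $L_{j'}$, $j' \ne j$, is trivially satisfied on a fixed neighborhood of $q_j$: the ``winning pair'' at $q_j$ for $L_{j'}$, namely $(x_1, x_2)$ if $j' > j$ and $(z_1, z_2)$ if $j' < j$, beats the remaining four entries by a gap of at least $1/4$ which survives perturbations of size $\le 1/12$. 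Thus $M \cap S$ coincides with $L_j \cap S$ in this neighborhood and inherits the concave corner at $q_j$.

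Finally, suppose $M \cap \Real^6 = P_1 \cup \cdots \cup P_N$ is a finite union of convex polyhedra. Then $M \cap S = \bigcup_i (P_i \cap S)$ is a finite union of convex polygons in $S \cong \Real^2$, whose boundary carries only finitely many corner points: these arise either as vertices of the $P_i \cap S$ or as transverse intersections of boundary edges of different $P_i \cap S$, both of which are finite. The distinct sequence $\{q_j\}_{j \ge 1}$ contradicts this, so $M \cap \Real^6$ is not a finite union of convex polyhedra, and \emph{a fortiori} $M$ is not a tropical prevariety. The main obstacle is the uniform estimate in the third step, providing a single neighborhood size independent of $j$ on which only the ``diagonal'' constraint $L_j$ is restrictive; the hypothesis $\eps_i \in (0, 1/4)$ is precisely what furnishes the required $1/4$-separation.
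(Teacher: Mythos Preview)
Your argument is correct, but it proceeds differently from the paper's proof. The paper works directly in $\Real^6$: it exhibits points $\mathbf{p}_j = (0,0,-j/2-1/4+\eps_j,-j/2-1/4,-j,-j)$ and shows that a full neighbourhood of $\mathbf{p}_j$ in $M$ is a $3$-cube $C_j$ lying in the affine $3$-plane $\mathbf{p}_j + \{(\delta_1,\delta_1,\delta_2,\delta_2,\delta_3,\delta_3)\}$; since $\eps_i \ne \eps_j$ forces these affine $3$-planes to be pairwise disjoint, no finite collection of convex polyhedra can account for the infinitely many $3$-dimensional flats. You instead pass to a carefully chosen $2$-dimensional slice $S$ and detect infinitely many concave corners on $\partial(M\cap S)$, which is impossible for a finite union of planar convex polygons.

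Both proofs share the same core local computation---only the ``diagonal'' hyperplane $L_j$ is binding near the distinguished $j$-th point, with the $1/4$ bound on the $\eps_i$ supplying the uniform separation. What differs is the invariant extracted from that computation: the paper records the \emph{affine hull} of the local piece (a $3$-plane depending on $\eps_j$), while you record a \emph{boundary singularity} of a $2$-slice. Your route is arguably more elementary once the slice is chosen, since the finiteness of corner points for a finite union of planar convex polygons is immediate; the paper's route avoids the slicing manoeuvre and yields slightly more, namely the full $3$-dimensional local description of $M$ near each $\mathbf{p}_j$.
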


\begin{proof}
Choose an integer $2 \le j < \infty$ and consider the point
$${\bf p}_j:= (0,0, -j/2- 1/4 + \eps_j, -j/2- 1/4, -j,-j) \in \Real^6.$$
A direct calculation shows that ${\bf p}_j \in M$ because minima in the set ${\mathcal L}_i({\bf p}_j)$
are attained at
\begin{itemize}
\item
coordinates $x_1,x_2$ if $i>j$,
\item
coordinates $y_1,y_2$ if $i=j$,
\item
coordinates $z_1,z_2$ if $i<j$.
\end{itemize}
Moreover, the same calculation shows that any point
${\bf p}_j + (\delta_1,\delta_1, \delta_2, \delta_2, \delta_3, \delta_3)$, for all
sufficiently small positive $| \delta_1|, | \delta_2|, | \delta_3|$, also belongs to $M$.
Hence, a neighbourhood of ${\bf p}_j$ in $M$ contains a 3-cube, denote it by $C_j$.

Conversely, a neighbourhood of ${\bf p}_j$ in $M$ is contained in $C_j$.
Indeed, consider a point ${\bf q}_j \in M$ which is sufficiently close to ${\bf p}_j$ in $M$.
It can be represented as
$${\bf q}_j={\bf p}_j+ (\alpha_1, \ldots, \alpha_6)=$$
$$=(\alpha_1, \alpha_2, -j/2 -1/4 +\eps_j+ \alpha_3, -j/2- 1/4+ \alpha_4, -j+ \alpha_5, -j+ \alpha_6)$$
for some small positive $| \alpha_\ell |$, $1 \le \ell \le 6$.
Since ${\bf q}_j \in L_{j-1} \cap L_j \cap L_{j+1}$, we conclude that each of the following three sets
has at least two minimal elements:
\begin{enumerate}
\item
$\{ 1+\alpha_1, 1+ \alpha_2, 1/4+ \alpha_3, 1/4 + \alpha_4, \alpha_5, \alpha_6 \},$
\item
$\{ \alpha_1, \alpha_2, -1/4 +\alpha_3, -1/4 + \alpha_4, \alpha_5, \alpha_6 \},$
\item
$\{ -1+ \alpha_1, -1+ \alpha_2, -3/4+ \alpha_3, -3/4 + \alpha_4, \alpha_5, \alpha_6 \}.$
\end{enumerate}
Since all $|\alpha_\ell|$ are small, minimal (hence equal) elements in (1) are $\alpha_5, \alpha_6$,
in (2) they are $-1/4 +\alpha_3, -1/4 + \alpha_4$, and in (3) they are $-1+ \alpha_1, -1+ \alpha_2$.
It follows that $\alpha_1= \alpha_2$, $\alpha_3= \alpha_4$, and $\alpha_5= \alpha_6$, hence
${\bf q}_j \in C_j$.

We have proved that the 3-cube $C_j$ is a neighbourhood of ${\bf p}_j$ in $M$.
By a direct calculation, any two points of the kind
$${\bf p}_i + (a_1,a_1,a_2,a_2,a_3,a_3)\> \text{and}\> {\bf p}_j+ (b_1,b_1,b_2,b_2,b_3,b_3),$$
where $i \neq j$ and all $a_\ell, b_\ell \neq \infty$, $1 \le \ell \le 3$, are distinct due to $\eps_i \neq \eps_j$.
This means that affine hulls of cubes $C_i$ and $C_j$ are disjoint in $\Real^6$ for
any two $i \neq j$, $2 \le i,j < \infty$, and therefore, $M \cap \Real^6$ is not a finite union of convex polyhedra.
Since every tropical prevariety is a finite polyhedral complex (see, e.g., \cite{MS}),
$M$ is not a tropical prevariety.
\end{proof}

\section*{Acknowledgements}
We thank M. Joswig, N. Kalinin, H. Markwig, and T. Theobald for useful discussions, and anonymous referees for
constructive remarks and suggestions.
Part of this research was carried out during our joint visit in September 2017 to the Hausdorff
Research Institute for Mathematics at Bonn University, under the program Applied and Computational Algebraic
Topology, to which we are very grateful.
D. Grigoriev was partly supported by the RSF grant 16-11-10075.


\begin{thebibliography}{99}

\bibitem{AGG} M. Akian, S. Gaubert, A. Guterman, Tropical polyhedra are equivalent to mean payoff games,
Internat. J. Algebra Comput., 22, 1 (2012), 43 pp.

\bibitem{BJS} T. Bogart, A. N. Jensen, D. Speyer, B. Sturmfels, R.R. Thomas,
Computing tropical varieties, J. Symbolic Comput. 42, 1--2 (2007) 54--73.

\bibitem{BH} P. Butkovic, G.Heged\"us, An elimination method for finding all solutions of the system
of linear equations over an extremal algebra, Ekon.-Mat. Obzor 20 (1984) 203--214.

\bibitem{C86} A. Chistov, An algorithm of polynomial complexity for factoring polynomials,
and determination of the components of a variety in a subexponential
time, J.Soviet Math., 34 (1986) 1838--1882.

\bibitem{C86-2} A. Chistov, Polynomial complexity of Newton-Puiseux algorithm, Lect.
Notes Comput. Sci., 233 (1986) 247--255.

\bibitem{DSS} M. Develin, F. Santos, B. Sturmfels, On the rank of a tropical matrix, In:
Combinatorial and Computational Geometry, MSRI Publications, v. 52 (2005).

\bibitem{DS} M. Develin, B. Sturmfels, Tropical convexity, Doc. Math., 9 (2004) 1--27.

\bibitem{DW} A. Dress, W. Wenzel, Algebraic, tropical, and fuzzy geometry, Beitr. Algebra Geom. 52, 2 (2011) 431–-461.

\bibitem{GK} S. Gaubert, R.D. Katz, Minimal half-spaces and external representation of tropical polyhedra,
J. Algebraic Combin. 33, 3 (2011) 325–-348.

\bibitem{GRS} P. G\"{o}rlach, Y. Ren, J. Sommars, Detecting tropical defects of polynomial equations,
arXiv:1809.03350, 2018.

\bibitem{G86} D. Grigoriev, Polynomial factoring over a finite field and solving systems
of algebraic equations, J. Soviet Math., 34 (1986) 1762--1803.

\bibitem{G13} D. Grigoriev, Complexity of solving tropical linear systems, Computational Complexity, 22 (2013) 71--88.

\bibitem{G15} D. Grigoriev, Polynomial complexity recognizing a tropical linear variety,  Lect.
Notes Comput. Sci., v. 9301 (2015) 152--157.

\bibitem{GP} D. Grigoriev, V. Podolskii,
Complexity of tropical and min-plus linear prevarieties, Computational Complexity, 24, 1 (2015) 31--64.

\bibitem{GV-CASC} D. Grigoriev, N. Vorobjov, Orthogonal tropical linear prevarieties,
in: CASC 2018, Lect. Notes Comput. Sci., v. 11077 (2018) 187--196.

\bibitem{GV} D. Grigoriev, N. Vorobjov, Upper bounds on Betti numbers of tropical prevarieties,
Arnold Math. J., 4, 1 (2018) 127--136.

\bibitem{HT} K. Hept, T. Theobald, Tropical bases by regular projections, Proc. Amer. Math. Soc., 137, 7 (2009)
2233--2241.

\bibitem{JMM} A. Jensen, H. Markwig, T. Markwig, An algorithm for lifting points in a tropical variety,
Collect. Math., 59, 2 (2008) 129--165.

\bibitem{JY} A. Jensen, J. Yu, Stable intersections of tropical varieties, Journal of Algebraic Combinatorics,
43, 1 (2016) 101--128.

\bibitem{BookJ} M. Joswig, Essentials of tropical combinatorics, Springer, 2014.

\bibitem{J} M. Joswig, The Cayley trick for tropical hypersurfaces with a view toward Ricardian economics,
in: Homological and Computational Methods in Commutative Algebra, Springer INdAM Series, 20, Springer, 2017.

\bibitem{JS} M. Joswig, B. Schr\"{o}ter, The degree of a tropical basis,
 Proc. Amer. Math. Soc. 146, 3 (2018) 961–-970.

\bibitem{KK}  Ya. Kazarnovskii, A. G. Khovanskii, Tropical noetherity and Gr\"obner bases,
St. Petersburg Math. J., 26, 5 (2015) 797--811.

\bibitem{Leng} S. Leng, Algebra, Springer, 2002.

\bibitem{MS} D. Maclagan, B. Sturmfels, Introduction to Tropical Geometry, American Math. Society, 2015.

\bibitem{MR} T. Markwig, Y. Ren, Computing Tropical Varieties Over Fields with Valuation, Found. Comput. Math., (2019).

\bibitem{MT} K. Murota, A. Tamura, On circuit valuation of matroids, Advances in Applied Mathematics, 26 (2001) 192–-225.

\bibitem{OP} B. Osserman, S. Payne, Lifting tropical intersections, Documenta Mathematica, 18 (2013) 121--175.

\bibitem{RST} J. Richter-Gebert, B. Sturmfels, T. Theobald, First steps in tropical geometry, In:
G. Litvinov, V. Maslov (Eds.), Idempotent Mathematics and Mathematical Physics
(Proceedings Vienna 2003), Contemporary Mathematics, 377, American Math. Society (2005) 289--317.

\bibitem{S} D. Speyer, Tropical linear spaces, arXiv:0410455, 2008.

\bibitem{YY} J. Yu, D.S. Yuster, Representing tropical linear spaces by circuits,
The 19th International Conference on Formal Power Series and Algebraic Combinatorics, 2007,
arXiv:0611579, 2006.


\end{thebibliography}
\end{document}